\theoremstyle{plain}
\newtheorem{lem}{Lemma}[section]
\newtheorem{thm}[lem]{Theorem}
\newtheorem{prop}[lem]{Proposition}
\theoremstyle{definition}
\newtheorem{defn}{Definition}[section]
\theoremstyle{remark}
\newtheorem{rem}{Remark}[section]
\begin{document}

\title{\large\bf {{Data-driven reduced-order modeling for nonautonomous dynamical  systems in multiscale  media}}}
\author{Mengnan Li\thanks{College of Science, Nanjing University of Posts and Telecommunications, Nanjing 210023, China ({\tt 18273125773@163.com}).}
\and
Lijian Jiang\thanks{School of Mathematical Sciences,  Tongji University, Shanghai 200092, China. ({\tt  ljjiang@tongji.edu.cn}), Corresponding author.}
}
\date{}
\maketitle
\begin{center}{\bf Abstract}
\end{center}\smallskip
In this article,  we present data-driven reduced-order modeling for nonautonomous dynamical systems in multiscale media. The Koopman operator has received extensive attention as an effective data-driven technique, which can transform the nonlinear dynamical systems into  linear  systems through acting on  observation function spaces. Different  from the case of autonomous dynamical  systems, the Koopman operator family of  nonautonomous dynamical  systems significantly  depend on a time pair. In order to effectively  estimate the time-dependent Koopman operators,  a moving time window is used to  decompose  the snapshot data, and  the extended dynamic mode decomposition  method is applied to computing  the Koopman operators in each local temporal domain. Many physical properties in multiscale media often vary in very different scales. In order to capture multiscale information well, the dimension of collected  data may be  high. To accurately construct the models of  dynamical  systems in multiscale media,  we use high spatial dimension of observation data.  It is  challenging to compute  the Koopman operators using the very high dimensional data.  Thus, the strategy of reduced-order modeling is proposed to treat the difficulty.  The proposed  reduced-order modeling includes two stages: offline stage and online stage. In offline stage, a block-wise low rank decomposition is used  to reduce the spatial dimension of initial snapshot data. For the nonautonomous dynamical  systems, real-time observation data may be required to update the Koopman operators. The online reduced-order modeling is proposed  to correct the offline reduced-order modeling.
 Three methods are developed for the online reduced-order modeling: fully online, semi-online and adaptive online.   The adaptive online method  automatically  selects the fully online or semi-online   and can achieve a good trade-off between  modeling accuracy and efficiency. A few numerical examples are presented to illustrate the  performance of the different reduced-order modeling methods.

\smallskip

{\bf keywords:}
  Nonautonomous dynamical  systems, Koopman operator, reduced-order modeling, multiscale media.

\section{Introduction}

Many dynamical  problems in engineering and science involve multiple scales and nonlinearity, such as, the flow of fluid in porous media, heat transfer or spread of viscous fluids and so on. In  these problems, the underlying  physical laws are often characterized by nonlinear evolution PDEs.   For example, the flow of an isentropic gas through porous media can be described by porous medium  equation \cite{Muskat1937}. The dynamic behavior of the physical laws may vary with time and is described by nonautonomous dynamical  systems. Common sources of the time variation include changes in system parameters, source term, stochastic noise and so on. However, it is often challenging for  these problems to
have a direct precise mathematical model.  With the progress of science and technology, for complex problems, we can obtain a large amount of measurement data. Because some physical properties (such as hydraulic conductivity) in multiscale media have large disparities, the spatial dimension of the collected  data is often very  high. Data-driven methods have attracted extensive attention due to their wide application in practice. Different from the traditional mathematical model derived from physical laws, the data-driven methods are dedicated to using data to exploit the model describing complex systems. When the specific form of the model is unknown and some measurement data about state are available, these data are usually used to construct the best fitting linear dynamical  system in the sense of least squares. However, for a nonlinear dynamical  system that changes drastically with time, it is difficult to accurately capture its dynamical  trajectory with a linearized system. Koopman operator \cite{koopman0,koopman1} as an effective data-driven tool has received widespread attention. It transforms a finite dimensional nonlinear system into an infinite dimensional linear system in Hilbert space of observation functions. For numerical solvability, the Koopman operator is usually approximated in a finite-dimensional subspace spanned by a set of observation functions \cite{Lusch2018,Takeishi2017}. Using these observation functions and measurement data, an approximate Koopman operator can be obtained and  reflect the dynamical  trajectory in the observation function space. Thus, the dynamical  trajectory of the state can also be obtained.

A variety of methods for computing the Koopman operator have been developed in \cite{Jovanovic2014, Klus2016}. Dynamic mode decomposition (DMD) \cite{Mezic2005, Schmid2010, Tu2014} is one of the methods and uses time series data to construct   data-driven models when the specific  mathematical model is unknown. DMD essentially constructs the best fitting linear system for a nonlinear dynamical  system. Williams et al \cite{Williams2015} further gave the relationship between Koopman operator and DMD, and developed extended dynamic mode decomposition (EDMD). This method requires a measurement data set and a dictionary of observation functions. Through the combination of measurement data set and observation functions, the observation data set can be obtained. Then  DMD method is applied to  the observation data set. In particular, DMD method can be regarded as a special case of EDMD of dictionary formed by identity function. When these dictionary functions are sufficiently rich, the matrix calculated by EDMD converges to the Koopman operator \cite{Korda2018}. Sufficient rich observation functions will result in a high dimensionality of the observation data. In order to improve the computation efficiency  of EDMD, kernel-DMD \cite{Williams2015kernel} efficiently computes the inner product in high-dimensional spaces by a kernel function.

For a nonautonomous dynamical  system,  the physical law and the nonlinearity of the system may significantly depend on time. It  may not give an accurate model by directly using DMD or its variant methods to estimate Koopman operator in a nonautonomous dynamical  system. To treat the difficulty, some extensions of the DMD and EDMD method were developed  to carry out  special nonautonomous dynamical  systems.  I. Mezi\'{c} and A. Surana \cite{Mezic2016} extended the Koopman operator to nonautonomous dynamical  systems with periodic and quasi periodic time dependence. In a time period, EDMD and kernel DMD algorithms are used to calculate the spectrum of Koopman operator. Multi-resolution dynamic mode decomposition (mrDMD) \cite{Kutz2016} is applied to dynamical  systems with multiple time scales. It combines the concepts of DMD and multi-resolution analysis \cite{Harti1993},  integrates time and space to separate multiscale spatio-temporal features and constructs an approximate dynamical  model. Online dynamic mode decomposition \cite{Zhang2019} is a method to update the system description online over time for time-varying systems. Online DMD can effectively update the approximate dynamical  systems in real time as new data is  available. However, this method relies on the assumption that the number of snapshots is much larger than the state space dimension. Since the Koopman operator for nonautonomous dynamical  system significantly depends on time, it is desirable  to use a moving stencils method to compute  the time-dependent Koopman operator. Ma\'{c}e\v{s}i\'{c} et al \cite{Macesic2018} showed  that there will be a significant error when Arnoldi-type algorithm is used to compute the eigenvalues of nonautonomous Koopman operator on moving stencils. However, this issue  will not appear when the moving stencil approach is used as the model fitting method.

In this paper, we propose a strategy of  reduced-order modeling for Koopman operators of nonautonomous dynamical  systems in multiscale media. We consider a dynamical  model with multiple scales in space, which makes the dimension of the numerical or experimental  data very high. We use these data to construct the dynamical  model through the Koopman operator. The nonautonomous Koopman operator is a two parameter operator, which depends on the current time and the start time. In order to compute the Koopman operator  accurately, the moving stencil is used to localize spatio-temporal data. In each local stencil, we assume that the dynamical  system is time invariant. Therefore, in the local time interval, we use EDMD to approximate the Koopman operator. However, for the local snapshot data of dynamical  system in multiscale media, the number of snapshots is less than the spatial dimension of observation data, so it is  difficult to compute the Koopman operator directly. Therefore, we propose a reduced-order  method to reduce the dimension of data, then use low-dimensional data to approximate the projection of the Koopman operator. When the  data for a new moment is available, Hemati et al \cite{Hemati2014} propose the Streaming DMD (SDMD) method, which is a low-storage method that can efficiently  compute the Koopman operator as new data become available. But SDMD is applicable to autonomous systems, where Koopman operator only depends on the time step, and SDMD is inexpensive to compute the Koopman operator in the case of increasing the number of snapshots. However, in this paper, our data comes from nonautonomous dynamical  system, the Koopman operator depends on a time pair. The Koopman operator in the new time interval is different from the Koopman operator in the previous time interval. In addition, since we use a fixed-size moving stencil, the number of snapshorts does not increase in the new time interval. Therefore, in order to predict the trajectory of nonautonomous dynamical  system in a new time interval, we propose online reduced-order modeling method to estimate the Koopman operator.

Data-driven methods for reduced-order modeling use projection operator to transform the dynamical  system in a high-dimensional space into a low-dimensional subspace. These methods aim to find low rank spatio-temporal patterns to accurately describe the evolution of the system. Different from the traditional model reduction method where the governing models are known (e.g.,  Galerkin projection onto proper orthogonal decomposition modes), the data-driven  reduced-order modeling is an equation-free approach and  uses the features extracted from the data to construct the dynamical  system. These data-driven methods include dynamic mode decomposition (DMD), sparse identification of nonlinear dynamics (SINDy) algorithm \cite{Brunton20162} and  the Hankel alternative view of Koopman (HAVOK) algorithm \cite{Brunton2017}. The projection operator plays an important role in the reduced-order modeling. In this paper, we update the projection operator online according to new observation data, so as to accurately identify the system. These observation data potentially obey a nonautonomous dynamical  system.  The projection operator constructed from initially given  snapshot data is called offline projection operator. When new observation data is available in real time, offline projection operator often may not  match it well. To this end,  we propose three online reduced-order modeling methods:  fully online, semi-online   and adaptive online.  The fully online reduced-order modeling  simultaneously updates the online projection operator and the low-dimensional data  by minimizing a decomposition residual. It is able to  achieve good computational accuracy but the computation cost is expensive. The semi-online reduced-order modeling  firstly  uses the offline projection operator to obtain the new low-dimensional data, and then constructs the online projection operator using the new low-dimensional data. Although the semi-online method significantly reduces the computation complexity, the modeling accuracy decreases. Combining the advantages of the fully online method and the semi-online method, the adaptive online reduced-order modeling  conditionally  selects the fully online and semi-online reduced-order modeling  according to the decomposition residual. We present a few numerical results to illustrate the effectiveness of the proposed methods.

This paper is organized as follows. In Section 2, we give a short introduction to Koopman operator. In Section 3, we propose the data-driven method with Koopman operator in nonautonomous dynamical  systems. In Section 4, a few numerical results are presented to illustrate the performance  of the different  reduced-order modeling methods. Finally, some conclusions and comments are given.

\section{Preliminaries}

In this paper, we consider a time-dependent state $u(x,t)$, whose dynamic behavior is described by the following nonlinear  PDE,
\begin{equation}\label{continum-PDE}
 \frac{\partial }{\partial t}u(x,t)=\mathcal{F} (\kappa(x),u, t, I)\ \  \text{in} \ \Omega\times (0,T].
\end{equation}
Here, $\mathcal{F}$ is a nonlinear nonautonomous differential operator, which depends on the coefficient function $\kappa(x)$, time $t$, state $u$ and its spatial partial derivatives. $I$ represents the model inputs, such as source term, initial and boundary  conditions.  The coefficient function  $\kappa(x)$ may characterize  multiscale media (e.g., permeability field in porous media), the ratio $\frac{\max(\kappa(x))}{\min(\kappa(x))}$ may be large. The multiscale characteristics of such problems result from the $\kappa(x)$ of the multiscale media.

If the model (\ref{continum-PDE}) is given,  we can use an  appropriate numerical method (e.g., FEM)  to discretize it. Firstly, the discretization in the spatial variable $x$ will produce an approximate solution $u_{h}(\cdot,t)$ in the $N$-dimensional space $V_{h}$, for $t>0$.
Then the approximate solution can be expressed by
\[
 u_{h}(x,t)=\sum_{i=1}^{N}z_{i}(t)\psi_{i}(x),
\]
where  $\{\psi_{i}(x)\}_{i=1}^{N}$ are the  basis functions in $V_{h}$. Therefore, the numerical approximation of system (\ref{continum-PDE}) produces the following type of  nonlinear dynamical  system,
\begin{equation}\label{continum-ODE}
  \left\{
 \begin{aligned}
\frac{d\bm{z}}{dt}&=\bm{F}(\bm{z},t), \ \  \bm{z}\in\mathbb{R}^{N}\\
\bm{z}({0})&=\bm{z}_{0},\ \ \ \ \ \ \ \ \bm{z}_{0}\in \mathbb{R}^{N},\\
 \end{aligned}
 \right.
\end{equation}
where $\bm{z}(t)=[z_{1}(t), z_{2}(t),\cdots, z_{N}(t)]^{T}$. If we use traditional  finite element methods to resolve all  scales on a fine grid, then  $N=N_{f}$ represents the number of fine-scale degrees of freedom and we have fine-scale data $\bm{z}(t)=[z_{1}(t), z_{2}(t),\cdots, z_{N_{f}}(t)]^{T}$. The idea of multiscale model reduction is to use coarse-scale degrees of freedom and multiscale basis functions to effectively express multiscale solutions. In multiscale finite element methods (e.g., MsFEM \cite{Hou1997} and CEM-GMsFEM \cite{Chung2018}), $N=N_{c}$ represents the number of coarse-scale degrees of freedom, $\{\psi_{i}(x)\}_{i=1}^{N_{c}}$ are the multiscale finite element basis functions, and the coarse-scale data $\bm{z}(t)=[z_{1}(t), z_{2}(t),\cdots, z_{N_{c}}(t)]^{T}$, where  $N_{c}\ll N_{f}$. In this paper, when we use coarse-scale data for modeling, we  actually build a reduced order model for multiscale problems. By the data-driven reduced-order modeling and multiscale basis functions $\{\psi_{i}(x)\}_{i=1}^{N_{c}}$, we can obtain the approximate solution of problem (\ref{continum-PDE}) in fine-scale.

The solution of system (\ref{continum-ODE}) depends on the current time $t$ and the starting time $t_{0}$. The solution of  the nonautonomous dynamical system defines a continuous time process, the process often called the two-parameter semi-group on $\mathbb{R}^{N}$. Let $ \bm{z}\in\mathcal{M}\subseteq\mathbb{R}^{N}$, the definition of the process is given below.

\begin{defn}
(Process formulation) \cite{Dafermos1971} A process is a continuous mapping $\mathcal{S}: \mathbb{T}\times\mathbb{T}\times \mathcal{M}\rightarrow \mathcal{M}$ such that the  two-parameter family $\mathcal{S}^{t,t_{0}}=\mathcal{S}(t,t_{0},\cdot)$ for $t,t_{0}\in\mathbb{T}$ satisfies the cocycle property
\begin{equation}\label{cocycle}
 \begin{split}
 \mathcal{ S}^{t+s,s}\circ\mathcal{ S}^{s,t_{0}}&=\mathcal{ S}^{t+s,t_{0}},\\
\mathcal{ S}^{t_{0},t_{0}}&=id.
 \end{split}
\end{equation}
The mapping $\mathcal{S}$ is called the process and the two-parameter family $\mathcal{S}^{t,t_{0}}$ is called the nonautonomous flow.
\end{defn}

The solution of  the nonautonomous dynamical system  (\ref{continum-ODE}) generates the nonautonomous flow $\mathcal{S}^{t,t_{0}}$, where $\bm{z}(t,t_{0},\bm{z}_{0})=\mathcal{S}^{t,t_{0}}(\bm{z}_{0})$.

Our purpose is to use  $M$ temporal snapshots of the solution to learn the time-dependent dynamical  system $\bm{F}$. Instead of using snapshots to approximate $\bm{F}$ by linearizing the dynamical  system  (\ref{continum-ODE}), we transform the nonlinear finite dimensional system in state space $\mathcal{M}$ into a linear infinite dimensional system in observation function space $\bm{G}(\mathcal{M})$ by Koopman operator. The following definition is given for the Koopman operator.
\begin{defn}
(Koopman operator) Consider the space  $\bm{G}(\mathcal{M})$ of scalar observables $g : \mathcal{M} \rightarrow \mathbb{R}$.  The  two-parameter
Koopman operator family $\mathcal{K} ^{t,t_{0}} : \bm{G}(\mathcal{M}) \rightarrow \bm{G}(\mathcal{M})$ is defined by
\[
\mathcal{K} ^{t,t_{0}}g(\bm{z}_{0}):=g\bigg(\mathcal{ S}^{t,t_{0}}(\bm{z}_{0})\bigg).
\]
\end{defn}

\begin{rem}
The observation functions play an important role in constructing/approximating  the Koopman operator.  When a set of scalar-valued observables  are functions of state $\bm{z}$,  the scalar observables $g : \mathcal{M} \rightarrow \mathbb{R}$. Then the Koopman operator family of nonautonomous dynamical systems is a two-parameter family. However, when a set of scalar-valued observables  are functions of state $\bm{z}$ and time $t$ \cite{Mezic2016},  the scalar observables $g : \mathcal{M\times \mathbb{T}} \rightarrow \mathbb{R}$. Then the Koopman operator family is a single parameter family. The specific comparison is described in Appendix \ref{app1} for the convenience of readers. In this paper, we consider the Koopman operator depends on two parameters.
\end{rem}

 From the cocycle property of the nonautonomous flow $\mathcal{ S}^{t,t_{0}}$, we have
 \[
  \begin{split}
  \mathcal{K} ^{t_{0},t_{0}}g(\bm{z}_{0})&=g\big(\mathcal{ S}^{t_{0},t_{0}}(\bm{z}_{0})\big)=g(\bm{z}_{0}),  \\
   \mathcal{K} ^{t+s,t_{0}}g(\bm{z}_{0})&=g\big(\mathcal{ S}^{t+s,t_{0}}(\bm{z}_{0})\big)=g\big(\mathcal{ S}^{t+s,s}\circ \mathcal{ S}^{s,t_{0}}(\bm{z}_{0})\big)\\
   &=g\big(\mathcal{ S}^{t+s,s}(\bm{z}_{s})\big)=\mathcal{K} ^{t+s,s}g(\bm{z}_{s})=\mathcal{K} ^{t+s,s}g\big(\mathcal{ S}^{s,t_{0}}(\bm{z}_{0})\big)\\
   &=\mathcal{K} ^{t+s,s}\circ \mathcal{K} ^{s,t_{0}} g(\bm{z}_{0}).
 \end{split}
  \]
  Consequently, the nonautonomous Koopman operator also satisfies the cocycle property,
  \[
 \mathcal{K} ^{t_{0},t_{0}}=id,\  \  \ \   \ \ \   \mathcal{K} ^{t+s,t_{0}}=\mathcal{K} ^{t+s,s}\circ \mathcal{K} ^{s,t_{0}}.
\]

We can define the continuous-time infinitesimal generator of the Koopman operator family.  For $\forall g\in \bm{G}(\mathcal{M})$, we have
\begin{equation}\label{generator}
\mathcal{L}(t_0)g(\bm{z}_{0}) :=\lim_{\triangle t\rightarrow 0}\frac{ \mathcal{K} ^{t_{0}+\triangle t,t_{0}}g(\bm{z}_{0})-g(\bm{z}_{0})}{\triangle t}.
\end{equation}
Applying the chain rule to the time derivative of $g(\bm{z})$, we get
\[
\frac{d}{dt}g(\bm{z}(t))=\nabla g(\bm{z}(t))\cdot \dot{\bm{z}}(t)=\nabla g(\bm{z}(t))\cdot \bm{F}(\bm{z},t),
\]
and
\[
\frac{d}{dt}g(\bm{z}(t))|_{t=t_{0}}=\lim_{\triangle t\rightarrow 0}\frac{g(\bm{z}(t_{0}+\triangle t))-g(\bm{z}_{0})}{\triangle t}=\mathcal{L}(t_0)g(\bm{z}_{0}).
\]
So we obtain the  following relation for this generator family,
\[
\mathcal{L}(t_0)g(\bm{z})=\nabla g(\bm{z})\cdot \bm{F}(\bm{z},t_{0}).
\]
Thus $\mathcal{L}(t)$ induces a linear dynamical system in continuous-time,
\begin{equation}\label{con-generator}
\frac{d}{dt}g(\bm{z})=\mathcal{L}(t)g(\bm{z}),\ \ \ \  \forall g\in \bm{G}(\mathcal{M}).
\end{equation}
Consequently, for a nonlinear nonautonomous dynamical  system, the infinitesimal generator $\mathcal{L}$ of the Koopman operator family can induce a linear nonautonomous dynamical  system. That is, $\forall g \in \bm{G}(\mathcal{M})$,
\[
\frac{d\bm{z}}{dt}=\bm{F}(\bm{z},t)\ \ \ \Rightarrow \ \ \ \frac{d}{dt}g(\bm{z})=\mathcal{L}(t)g(\bm{z}).
\]

Instead of capturing the evolution of all observation functions in the infinite dimensional space $\bm{G}(\mathcal{M})$, we apply Koopman analysis to estimate the evolution in a finite-dimensional subspace $\mathcal{G}(\mathcal{M})\subseteq \bm{G}(\mathcal{M})$ spaned by a set of linearly independent observation functions $\{g_{1},\dots,g_{q} \} (q<\infty)$. The Galerkin projection of the Koopman operator  $\mathcal{K}^{t,t_{0}}$ to $\mathcal{G}(\mathcal{M})$ is the linear operator $\mathcal{K}_{P}^{t,t_{0}}: \bm{G}(\mathcal{M})\rightarrow\mathcal{G}(\mathcal{M})$ satisfying,  for $ \forall t,t_{0} \in \mathbb{T}, \forall h(\bm{z})\in \bm{G}(\mathcal{M})$,
\[
\big<\mathcal{K}^{t,t_{0}}h(\bm{z}),g_{j}(\bm{z})\big>=\big<\mathcal{K}_{P}^{t,t_{0}}h(\bm{z}),g_{j}(\bm{z})\big>,\ \ j=1,2,\cdots,q.
\]
where $<\cdot,\cdot>$ is a inner product in $\bm{G}(\mathcal{M})$. For the convenience of explanation, in this paper we use $<h(\bm{z}),g(\bm{z})>: =\int_{\mathcal{M}} h(\bm{z})\overline{g(\bm{z})}d\bm{z}$.
\begin{thm}
 \cite{Korda2018}:  If the Koopman operator is bounded, then the sequence of operators $\mathcal{K}_{P}^{t,t_{0}}$ converges strongly to $\mathcal{K}^{t,t_{0}}$ as $q\rightarrow\infty$, i.e.,
\[
\lim_{q\rightarrow\infty}\int_{\mathcal{M}}|\mathcal{K}_{P}^{t,t_{0}}h(\bm{z})-\mathcal{K}^{t,t_{0}}h(\bm{z})|^{2}d\mu=0
\]
for all $h(\bm{z})\in \bm{G}(\mathcal{M})$.
\end{thm}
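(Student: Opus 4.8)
The plan is to recognize the defining Galerkin relation as saying that $\mathcal{K}_P^{t,t_0}h$ is nothing but the orthogonal projection of $\mathcal{K}^{t,t_0}h$ onto the finite-dimensional dictionary space, and then to invoke the standard Hilbert-space fact that such projections converge strongly to the identity once the dictionary spans a dense subspace. First I would let $P_q$ denote the $\langle\cdot,\cdot\rangle$-orthogonal projection onto $\mathcal{G}(\mathcal{M})=\mathrm{span}\{g_1,\dots,g_q\}$. The condition $\langle \mathcal{K}^{t,t_0}h-\mathcal{K}_P^{t,t_0}h,\,g_j\rangle=0$ for every $j$ says precisely that the residual $\mathcal{K}^{t,t_0}h-\mathcal{K}_P^{t,t_0}h$ is orthogonal to all of $\mathcal{G}(\mathcal{M})$; since $\mathcal{K}_P^{t,t_0}h\in\mathcal{G}(\mathcal{M})$ by construction, orthogonality together with membership pins down $\mathcal{K}_P^{t,t_0}h=P_q\bigl(\mathcal{K}^{t,t_0}h\bigr)$. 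This reduces the claim to showing $\|P_q f-f\|\to 0$ for the single element $f:=\mathcal{K}^{t,t_0}h$.

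The boundedness hypothesis enters exactly here: because $\mathcal{K}^{t,t_0}$ is bounded it maps $\bm{G}(\mathcal{M})$ into itself, so $f=\mathcal{K}^{t,t_0}h$ is a genuine element of the Hilbert space $\bm{G}(\mathcal{M})$, and the quantity $\int_{\mathcal{M}}|P_q f-f|^2\,d\mu=\|P_q f-f\|^2$ is finite and is precisely the object to be driven to zero. Without boundedness $f$ need not lie in the Hilbert space and the projection argument would not even be well-posed.

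Next I would use the standing assumption (implicit in the phrase \emph{sufficiently rich} dictionaries) that the observation functions $\{g_i\}_{i\ge 1}$ are chosen so that the nested subspaces $\mathcal{G}_1\subseteq\mathcal{G}_2\subseteq\cdots$ have union dense in $\bm{G}(\mathcal{M})$. Granting this, the conclusion follows from the best-approximation property of the orthogonal projection: given $\varepsilon>0$, density furnishes an index $q_0$ and an element $w\in\mathcal{G}_{q_0}$ with $\|f-w\|<\varepsilon$; for every $q\ge q_0$ one has $w\in\mathcal{G}_q$, and since $P_q f$ is the closest point of $\mathcal{G}_q$ to $f$, it follows that $\|f-P_q f\|\le\|f-w\|<\varepsilon$. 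Hence $\|P_q f-f\|\to 0$, which is exactly the assertion $\int_{\mathcal{M}}|\mathcal{K}_P^{t,t_0}h-\mathcal{K}^{t,t_0}h|^2\,d\mu\to 0$ as $q\to\infty$.

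I expect the only real content to be the density of $\bigcup_q\mathcal{G}_q$ in $\bm{G}(\mathcal{M})$; everything downstream is the routine monotone-projection argument. Thus the main obstacle is not analytic but definitional, namely making precise (or importing from the cited source) the richness requirement on the dictionary that guarantees density, and confirming that the inner product, and hence the measure $\mu$, used to define the projection is the same one appearing in the norm of the conclusion. Once density is in hand, boundedness together with the contraction property of orthogonal projections closes the argument with no further estimates.
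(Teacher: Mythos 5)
Your reconstruction is correct as a proof of the statement as this paper literally formulates it, but note that there is no in-paper proof to compare against: the theorem is imported verbatim with the citation to Korda--Mezi\'{c}, and the paper never proves it. Measured against the cited source, your argument is essentially the standard one: the Galerkin conditions force $\mathcal{K}_{P}^{t,t_{0}}h=P_{q}\bigl(\mathcal{K}^{t,t_{0}}h\bigr)$, and then nestedness of the dictionary subspaces plus density of $\bigcup_{q}\mathcal{G}_{q}$ gives $\|P_{q}f-f\|\to 0$ by the best-approximation property. You are also right that the density assumption is the real content; it is an explicit standing hypothesis in Korda--Mezi\'{c} but is silently omitted from the theorem as quoted here, and without it the claim is false (take every $g_{i}$ inside a fixed proper closed subspace). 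Your remark about the mismatch between the inner product $\int_{\mathcal{M}}h\overline{g}\,d\bm{z}$ and the measure $\mu$ in the conclusion is likewise a genuine inconsistency in the paper, not in your argument.

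The one substantive inaccuracy is your account of where boundedness enters. Under the paper's definition, $\mathcal{K}^{t,t_{0}}$ maps $\bm{G}(\mathcal{M})$ into itself by fiat, so $f=\mathcal{K}^{t,t_{0}}h$ lies in the Hilbert space with or without boundedness, and in fact your proof never uses the hypothesis at all. Boundedness is needed in the cited theorem because the operator that EDMD actually approximates carries an inner projection as well: it is $P_{q}\,\mathcal{K}^{t,t_{0}}P_{q}$ (the method only sees data through the dictionary), and the convergence proof splits
$\|P_{q}\mathcal{K}P_{q}f-\mathcal{K}f\|\le\|\mathcal{K}\|\,\|P_{q}f-f\|+\|P_{q}\mathcal{K}f-\mathcal{K}f\|$,
where finiteness of $\|\mathcal{K}\|$ controls the first term. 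So your argument proves strong convergence of $P_{q}\mathcal{K}$ (the paper's literal definition of $\mathcal{K}_{P}^{t,t_{0}}$), which is a hypothesis-free statement, whereas the result the citation refers to concerns $P_{q}\mathcal{K}P_{q}$ and genuinely uses boundedness; the two claims are different, and it is worth being explicit about which one you have established.
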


In the finite dimensional subspace $\mathcal{G}(\mathcal{M})$, we have
\[
\forall g(\bm{z})\in \mathcal{G}(\mathcal{M}), \ \ \ \  \mathcal{K}_{P}^{t,t_{0}}g(\bm{z})\in \mathcal{G}(\mathcal{M}).
\]
In particular, if the subspace $\mathcal{G}(\mathcal{M})$ is an invariant subspace of the Koopman operator $\mathcal{K}^{t,t_{0}}$, that is,
\[
\forall g(\bm{z})\in \mathcal{G}(\mathcal{M}), \ \ \ \  \mathcal{K}^{t,t_{0}}g(\bm{z})\in \mathcal{G}(\mathcal{M}),
\]
 then
 \[
 \mathcal{K}^{t,t_{0}}g(\bm{z})=\mathcal{K}_{P}^{t,t_{0}}g(\bm{z}),  \ \ \ \ \forall g(\bm{z})\in \mathcal{G}(\mathcal{M}).
 \]

 In order to realize the numerical computation, we focus on the operator $\mathcal{K}_{P}^{t,t_{0}}$, which is the projection  of Koopman operator  $\mathcal{K}^{t,t_{0}}$ in the finite dimensional subspace. In general, extended dynamical mode decomposition (EDMD)  \cite{Williams2015}  is a method to obtain an approximation of the operator $\mathcal{K}_{P}^{t,t_{0}}$. We consider the restriction of $\mathcal{K}_{P}^{t,t_{0}}$ to $\mathcal{G}(\mathcal{M})$ and denote it by $\mathcal{K}_{P}^{t,t_{0}}|_{\mathcal{G}}$. In particular, $\mathcal{K}_{P}^{t,t_{0}}|_{\mathcal{G}}=\mathcal{K}^{t,t_{0}}|_{\mathcal{G}}$ when ${\mathcal{G}}$ is an invariant subspace of the Koopman operator or $q\rightarrow\infty$. In the following description, we assume $\mathcal{K}_{P}^{t,t_{0}}|_{\mathcal{G}}=\mathcal{K}^{t,t_{0}}|_{\mathcal{G}}$.

The  $\mathcal{K}_{P}^{t,t_{0}}|_{\mathcal{G}}$ is a finite-dimensional linear operator. Let $\bm{g}=[g_{1},\dots,g_{q}]^{T}$ be a vector-valued observation. Then $\mathcal{K}_{P}^{t,t_{0}}|_{\mathcal{G}}$ has a matrix-form representation $\bm{K}^{t,t_{0}}\in \mathbb{R}^{q\times q}$ with respect to $\{g_{1},\dots,g_{q} \} $, i.e.,

\[
\begin{split}
\bm{g}(\bm{z}(t))&=
\left[ \begin{array}{c}
  g_{1}(\bm{z}(t))\\
  g_{2}(\bm{z}(t))\\
  \vdots\\
  g_{q}(\bm{z}(t))
  \end{array}
  \right ]=\left[ \begin{array}{c}
  \mathcal{K}^{t,t_{0}}g_{1}(\bm{z}_{0})\\
  \mathcal{K}^{t,t_{0}}g_{2}(\bm{z}_{0})\\
  \vdots\\
  \mathcal{K}^{t,t_{0}}g_{q}(\bm{z}_{0})
  \end{array}
  \right ]=\left[ \begin{array}{cccc}
  k^{t,t_{0}}_{11}&k^{t,t_{0}}_{12}&\ldots&k^{t,t_{0}}_{1q}\\
  k^{t,t_{0}}_{21}&k^{t,t_{0}}_{22}&\ldots&k^{t,t_{0}}_{2q}\\
  \vdots&\vdots&      &\vdots\\
  k^{t,t_{0}}_{q1}&k^{t,t_{0}}_{q2}&\ldots&k^{t,t_{0}}_{qq}
  \end{array}
  \right ]\left[ \begin{array}{c}
  g_{1}(\bm{z}_{0})\\
  g_{2}(\bm{z}_{0})\\
  \vdots\\
  g_{q}(\bm{z}_{0})
  \end{array}
  \right ]\\
  &=\bm{K}^{t,t_{0}}\bm{g}(\bm{z}_{0}).
  \end{split}
  \]
Therefore, in the finite dimensional subspace $\mathcal{G}(\mathcal{M})$, we have
\begin{equation}\label{matrix_Koop}
\bm{g}(\bm{z}(t))=\bm{K}^{t,t_{0}}\bm{g}(\bm{z}_{0}).
 \end{equation}

According to the cocycle property of Koopman operator family $\mathcal{K}^{t,t_{0}}$, it is easy to verify that the two-parameter matrix $\bm{K}^{t,t_{0}}$ also satisfies the following property,
\[
 \bm{K} ^{t_{0},t_{0}}=\bm{I},\  \  \ \   \ \ \   \bm{K} ^{t+s,t_{0}}=\bm{K} ^{t+s,s} \bm{K} ^{s,t_{0}}.
 \]
Similar to the definition and properties of the generator $\mathcal{L}(t)$ of Koopman operator family $\mathcal{K}^{t,t_{0}}$, we obtain that the generator $\bm{L}(t)\in\mathbb{R}^{q\times q}$ of matrix family $\bm{K} ^{t,t_{0}}$ satisfies the following dynamical  system,
\begin{equation}\label{matrix_generator}
\frac{d}{dt}\bm{g}(\bm{z})=\bm{L}(t)\bm{g}(\bm{z}).
 \end{equation}
The approximate nonautonomous Koopman operator $ \mathcal{K}_{P}^{t,t_{0}}$ eigenvalue $\lambda^{t,t_{0}}$ and eigenfunction $\varphi_{\lambda^{t,t_{0}}}$ are defined by
\begin{equation}\label{eigen}
 \mathcal{K}_{P}^{t,t_{0}}\varphi_{\lambda^{t,t_{0}}}(\bm{z}_{0})=\lambda^{t,t_{0}}\varphi_{\lambda^{t,t_{0}}}(\bm{z}_{0}).
\end{equation}

\begin{prop}
If $\big\{\lambda_{i}^{t,t_{0}}, \bm{w}_{i}^{t,t_{0}}, \bm{v}_{i}^{t,t_{0}}\big\}_{i=1}^{q}$ are the triple of the  eigenvalues, left and right eigenvectors of the matrix $\bm{K}^{t,t_{0}}$, then
\[
\varphi_{i}^{t,t_{0}}(\bm{z})=(\bm{w}_{i}^{t,t_{0}})^{T}\bm{g}(\bm{z})
\]
are the eigenfunctions of the approximate nonautonomous Koopman operator $\mathcal{K}_{P}^{t,t_{0}}$ corresponding to eigenvalues $\lambda_{i}^{t,t_{0}}, i=1,2,\cdots,q$.

Moreover, if matrices $\bm{L}(t)$ commute and  are diagonalizable, with eigenvalues $\theta_{i}(t)$ and the corresponding left eigenvectors $ \bm{w}_{i}$, then
\[
 \lambda_{i}^{t,t_{0}}=\exp\bigg(\int_{t_{0}}^{t}\theta_{i}(\tau)d\tau\bigg), \ \ \ \  \bm{w}_{i}^{t,t_{0}}= \bm{w}_{i}.
\]
 \end{prop}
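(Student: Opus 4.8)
The plan is to prove the two assertions separately. For the first claim, I would verify directly that $\varphi_i^{t,t_0}(\bm{z}) = (\bm{w}_i^{t,t_0})^T \bm{g}(\bm{z})$ is an eigenfunction of $\mathcal{K}_P^{t,t_0}$. Starting from the matrix identity (\ref{matrix_Koop}), namely $\bm{g}(\bm{z}(t)) = \bm{K}^{t,t_0}\bm{g}(\bm{z}_0)$, I would apply the definition of the Koopman operator together with the fact that left eigenvectors of $\bm{K}^{t,t_0}$ satisfy $(\bm{w}_i^{t,t_0})^T \bm{K}^{t,t_0} = \lambda_i^{t,t_0}(\bm{w}_i^{t,t_0})^T$. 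Then
\begin{equation*}
\mathcal{K}_P^{t,t_0}\varphi_i^{t,t_0}(\bm{z}_0) = (\bm{w}_i^{t,t_0})^T \bm{g}(\bm{z}(t)) = (\bm{w}_i^{t,t_0})^T \bm{K}^{t,t_0}\bm{g}(\bm{z}_0) = \lambda_i^{t,t_0}(\bm{w}_i^{t,t_0})^T \bm{g}(\bm{z}_0) = \lambda_i^{t,t_0}\varphi_i^{t,t_0}(\bm{z}_0),
\end{equation*}
which matches the eigenvalue relation (\ref{eigen}). This part is essentially a one-line computation once the left-eigenvector property is invoked.

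For the second claim, the key hypotheses are that the generator matrices $\bm{L}(t)$ commute for all $t$ and are simultaneously diagonalizable with time-dependent eigenvalues $\theta_i(t)$ and fixed left eigenvectors $\bm{w}_i$. My approach is to connect the generator dynamics (\ref{matrix_generator}) to the matrix family $\bm{K}^{t,t_0}$. Since $\bm{g}(\bm{z}(t)) = \bm{K}^{t,t_0}\bm{g}(\bm{z}_0)$ and $\frac{d}{dt}\bm{g}(\bm{z}) = \bm{L}(t)\bm{g}(\bm{z})$, substituting shows that $\bm{K}^{t,t_0}$ solves the matrix initial value problem $\frac{d}{dt}\bm{K}^{t,t_0} = \bm{L}(t)\bm{K}^{t,t_0}$ with $\bm{K}^{t_0,t_0} = \bm{I}$. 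Under the commuting assumption $[\bm{L}(t),\bm{L}(s)] = 0$, the time-ordered exponential collapses and the unique solution is $\bm{K}^{t,t_0} = \exp\bigl(\int_{t_0}^t \bm{L}(\tau)\,d\tau\bigr)$.

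From here I would exploit simultaneous diagonalizability. Writing the left eigenvector relation $\bm{w}_i^T \bm{L}(t) = \theta_i(t)\bm{w}_i^T$ and applying $\bm{w}_i^T$ on the left of the matrix exponential, I would use the fact that $\bm{w}_i^T$ is a common left eigenvector of every $\bm{L}(\tau)$, hence of $\int_{t_0}^t \bm{L}(\tau)\,d\tau$ with eigenvalue $\int_{t_0}^t \theta_i(\tau)\,d\tau$, and therefore of the exponential with eigenvalue $\exp\bigl(\int_{t_0}^t \theta_i(\tau)\,d\tau\bigr)$. This gives
\begin{equation*}
\bm{w}_i^T \bm{K}^{t,t_0} = \exp\Bigl(\int_{t_0}^t \theta_i(\tau)\,d\tau\Bigr)\bm{w}_i^T,
\end{equation*}
so $\bm{w}_i$ is a left eigenvector of $\bm{K}^{t,t_0}$ with eigenvalue $\lambda_i^{t,t_0} = \exp\bigl(\int_{t_0}^t \theta_i(\tau)\,d\tau\bigr)$ and $\bm{w}_i^{t,t_0} = \bm{w}_i$, as claimed. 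The main obstacle, and the place requiring the most care, is justifying the collapse of the time-ordered (Dyson) product into the ordinary matrix exponential: this step genuinely needs the commutativity hypothesis $[\bm{L}(t),\bm{L}(s)]=0$, since for non-commuting generators $\frac{d}{dt}\exp\bigl(\int_{t_0}^t \bm{L}\bigr) \neq \bm{L}(t)\exp\bigl(\int_{t_0}^t \bm{L}\bigr)$ in general. I would make this explicit either by differentiating the candidate solution and checking the commutator terms vanish, or by citing the standard theory of linear nonautonomous systems with commuting coefficients.
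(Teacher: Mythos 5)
Your proof is correct and follows essentially the same route as the paper: the first part is the identical computation that pulls $(\bm{w}_i^{t,t_0})^T$ through $\mathcal{K}_P^{t,t_0}$ by linearity, uses the matrix representation (\ref{matrix_Koop}), and invokes the left-eigenvector relation, while the second part likewise solves the generator dynamics (\ref{matrix_generator}) to get $\bm{K}^{t,t_0}=\exp\bigl(\int_{t_0}^{t}\bm{L}(\tau)\,d\tau\bigr)$ and reads off the eigenpairs from the common eigenvectors of the commuting family. The only differences are cosmetic—you apply $\bm{w}_i^T$ directly to the matrix exponential where the paper writes out the full decomposition $\bm{V}\exp\bigl(\int_{t_0}^{t}\Theta(\tau)\,d\tau\bigr)\bm{W}^T$—and your explicit justification that commutativity is what collapses the time-ordered exponential into the ordinary one is a point of rigor the paper passes over silently.
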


\begin{proof}
Notice that
\[
\bm{K}^{t,t_{0}}=\bm{V}^{t,t_{0}}\Lambda^{t,t_{0}}(\bm{W}^{t,t_{0}})^{T},
\]
where $\Lambda^{t,t_{0}}=\text{diag}(\lambda_{1}^{t,t_{0}},\lambda_{2}^{t,t_{0}},\cdots,\lambda_{q}^{t,t_{0}})$, the columns of matrix $\bm{W}^{t,t_{0}}$ are the left eigenvectors of matrix $\bm{K}^{t,t_{0}}$, and the columns of matrix $\bm{V}^{t,t_{0}}$ are the right eigenvectors.

Since the approximate nonautonomous Koopman operator $\mathcal{K}_{P}^{t,t_{0}}$ is a linear operator, acting on the observation function $\varphi_{i}^{t,t_{0}}$ implies that
\[
\begin{split}
\mathcal{K}_{P}^{t,t_{0}}\varphi_{i}^{t,t_{0}}(\bm{z}_{0})&=\mathcal{K}_{P}^{t,t_{0}}(\bm{w}_{i}^{t,t_{0}})^{T}\bm{g}(\bm{z}_{0})\\
&=(\bm{w}_{i}^{t,t_{0}})^{T}\mathcal{K}_{P}^{t,t_{0}}\bm{g}(\bm{z}_{0})\\
&=(\bm{w}_{i}^{t,t_{0}})^{T}\bm{K} ^{t,t_{0}}\bm{g}(\bm{z}_{0})\\
&=\lambda_{i}^{t,t_{0}}(\bm{w}_{i}^{t,t_{0}})^{T}\bm{g}(\bm{z}_{0})\\
&=\lambda_{i}^{t,t_{0}}\varphi_{i}^{t,t_{0}}(\bm{z}_{0}).
\end{split}
\]
So $\varphi_{i}^{t,t_{0}}$ is the eigenfunction of the operator $\mathcal{K}_{P}^{t,t_{0}}$.

If matrices $\bm{L}(t), t\in \mathbb{T}$ commute and are diagonalizable,  then they are simultaneously diagonalizable. They share the common left eigenvectors $\{\bm{w}_{i}\}_{i=1}^{q}$ and right eigenvectors $\{\bm{v}_{i}\}_{i=1}^{q}$. Then $\bm{L}(t)=\bm{V}\Theta(t)\bm{W}^{T}$, where $\Theta(t)=\text{diag}(\theta_{1}(t),\theta_{2}(t),\cdots,\theta_{q}(t))$, the columns of matrix $\bm{W}$ are the common left eigenvectors, and the columns of matrix $\bm{V}$ are the common right eigenvectors.

Because
\[
\frac{d}{dt}\bm{g}(\bm{z})=\bm{L}(t)\bm{g}(\bm{z}),
\]
we have
\[
\bm{g}(\bm{z}(t))=\exp\bigg(\int_{t_{0}}^{t}\bm{L}(\tau)d\tau\bigg)\bm{g}(\bm{z}_{0}).
\]
Combined with the equation (\ref{matrix_Koop}), we get
\[
\bm{K}^{t,t_{0}}=\exp\bigg(\int_{t_{0}}^{t}\bm{L}(\tau)d\tau\bigg)=\bm{V}\exp\bigg(\int_{t_{0}}^{t}\Theta(\tau)d\tau\bigg)\bm{W}^{T}.
\]
So
\[
\bm{w}_{i}^{t,t_{0}}= \bm{w}_{i}, \ \ \ \bm{v}_{i}^{t,t_{0}}= \bm{v}_{i},\ \ \  \lambda_{i}^{t,t_{0}}=\exp\bigg(\int_{t_{0}}^{t}\theta_{i}(\tau)d\tau\bigg), \ \ i=1,2\cdots,q.
\]
\[
\begin{split}
\bm{K}^{t,t_{0}}&=\bm{K}^{t,s}\bm{K}^{s,t_{0}}\\
&=\bm{V}^{t,s}\Lambda^{t,s}(\bm{W}^{t,s})^{T}\bm{V}^{s,t_{0}}\Lambda^{s,t_{0}}(\bm{W}^{s,t_{0}})^{T}\\
&=\bm{V}\Lambda^{t,s}\Lambda^{s,t_{0}}\bm{W}^{T}  \ \ \ \ \text{(matrices $\bm{L}(t)$ commute and are diagonalizable)}.
\end{split}
\]
\end{proof}

For complex nonautonomous multiscale problems, in order to accurately capture the information of all scales, the measurement data $\bm{z}$ in fine scale may be desirable. However, it is  expensive to directly obtain  fine-scale data. If we have some prior knowledge of the model, it will be relatively cheap to collect  coarse-scale data. Whether  fine-scale data or  coarse-scale data are used, the dimension $q$ of subspace $\mathcal{G}$ needs to be large enough to accurately estimate the evolution of observation function by using Koopman operator. Therefore, for multiscale problems, the spatial dimension of the observation data is much larger than the number of snapshots. In addition, nonautonomous Koopman operators depend on time,  this poses a great challenge to predict  the trajectory of the system using data. In this paper, we will develop an efficient  online method to compute the  Koopman operator and construct surrogate  models   for nonautonomous dynamical  systems using data when the model $\bm{F}$ is unknown.

%%%%%%%%%%%%%%%%%%%%%%%%%%%%%%%%%%%%%%%%%%
\section{Data-driven modeling}
In this section, we propose a numerical method for efficient computation of nonautonomous Koopman operators.

Suppose that for nonautonomous system (\ref{continum-ODE}), we have selected a finite number of observation functions $\bm{g}:=\{g_{1},g_{2},\cdots,g_{q}\}$ and we are given snapshots of data $\{\bm{z}^{0},\bm{z}^{1},\cdots,\bm{z}^{M}\}$, where $\bm{z}^{i}=\bm{z}(t_{i}), t_{i}=i\Delta t$, $\Delta t$ is the time-step. So we have snapshot data of the observable
\[
\bm{S}=\big[\bm{g}(\bm{z}^{0}),\bm{g}(\bm{z}^{1}),\cdots,\bm{g}(\bm{z}^{M})\big],
\]
where the number of rows of $\bm{S}$ represents the spatial dimension, and the $i$-th column of $\bm{S}$ represents the observation data at time $t_{i}$.

We have the following two goals:
\begin{itemize}
 \item Using these snapshots to compute the approximation of the Koopman operators $\mathcal{K} ^{t_{k},t_{k-1}}, k=1,\cdots,M$. This stage  is offline.

 \item When new observation data $\bm{g}(\bm{z}^{N})$ is available in real time, the approximation of Koopman operator $\mathcal{K} ^{t_{N},t_{M}}$ can be realized efficiently, where $t_{M}\textless t_{N}$. This stage is  online.

 \end{itemize}

 For the nonautonomous dynamical  system, in order to compute Koopman operator by using snapshot data, we assume that over a time window $t_{i}\in [t_{k},t_{k+m}]$,
\begin{equation}\label{local-linear}
\bm{g}(\bm{z}^{i+1})\approx\bm{K} ^{t_{k+1},t_{k}}\bm{g}(\bm{z}^{i}),\ \ \ i=k,k+1,\cdots,k+m-1,
\end{equation}
where $m$ is fixed over the computational domain. So the local stencil of snapshots are generated by data collected over each time window $[t_{k},t_{k+m}]$,  $k=0,1,2,\cdots,M-m$. Let us look at the local stencil snapshots
\[
\bm{S}_{k}=[\bm{g}(\bm{z}^{k}),\bm{g}(\bm{z}^{k+1}),\cdots,\bm{g}(\bm{z}^{k+m})].
\]
We use the forward-positioned stencil only for technical reasons, because otherwise there is no data for the first local snapshot.
To use the EDMD method to approximate the matrix $\bm{K} ^{t_{k+1},t_{k}}$, we first need to divide the snapshots into two data matrices $\bm{X}$ and $\bm{Y}$,
\[
 \begin{split}
\bm{X}&=[\bm{g}(\bm{z}^{k}),\bm{g}(\bm{z}^{k+1}),\cdots,\bm{g}(\bm{z}^{k+m-1})],\\
\bm{Y}&=[\bm{g}(\bm{z}^{k+1}),\bm{g}(\bm{z}^{k+2}),\cdots,\bm{g}(\bm{z}^{k+m})].
 \end{split}
\]
According to the equation (\ref{local-linear}), we get
\[
\bm{Y}\approx\bm{K} ^{t_{k+1},t_{k}}\bm{X}.
\]
The EDMD method obtains an estimate of the linear map $\bm{K} ^{t_{k+1},t_{k}}$ by solving the following least-squares problem,
\begin{equation}\label{least-sq-fu}
\bm{A}^{t_{k+1},t_{k}}=\mathop{\text{argmin}}\limits_{\widehat{\bm{A}}}\|\mathbf{Y}-\widehat{\bm{A}}\mathbf{X}\|_{F}^{2}.
\end{equation}
The estimator for the best-fit linear map is given by
\[
\bm{A}^{t_{k+1},t_{k}}:=\bm{Y}\bm{X}^{\dagger},
\]
where $\bm{X}^{\dagger}$ is the Moore-Penrose pseudoinverse of $\bm{X}$. Notice that the matrices $\bm{X}, \bm{Y}\in\mathbb{R}^{q\times m}$. Because the nonautonomous dynamical  system changes with time, our hypothesis $\bm{g}(\bm{z}^{i+1})\approx\mathcal{K} ^{t_{k+1},t_{k}}\bm{g}(\bm{z}^{i})$ is only true in a short period of time. Therefore, the spatial dimension of local stencil snapshots is much larger than the temporal dimension, that is $q> m$. This issue   particularly happens  in multiscale problems. In this case, matrix $\bm{A}^{t_{k+1},t_{k}}$ is difficult to solve directly. For multiscale problems, measurement data $\bm{z}$ at different spatial scales will significantly  affect the dimension $q$ of the selected finite dimensional space. The $q$ will be relatively large if the spatial fine-scale data is collected. For example, when $\bm{g}(\bm{z})=\bm{z}$, the dimension of the measurement data $\bm{z}$ affects the dimension of the observation data $\bm{g}(\bm{z})$. In order to treat high-dimensional data, we propose a reduced-order modeling method for the high-dimensional snapshot data $\bm{S}$ to obtain  low-dimensional data $\bm{B}$, and the Koopman operator can be approximated efficiently by using the low-dimensional data. When new data is available, the projection operator for reduced-order modeling obtained from snapshots data may not match the new data well. To overcome the difficulty, we also propose an online adaptive method to realize the spatial reduction of high-dimensional data.

 \subsection{Offline reduced-order modeling} \label{offfline_sdm}

 Due to the high spatial dimensionality of the data, it is very computationally expensive to directly use the method of low rank decomposition (such as SVD) to reduce the dimension of data.  In this subsection, we first divide the data into spatial blocks and then use the low rank decomposition method to obtain low-dimensional data, so as to effectively construct  the nonautonomous Koopman operator. Now  we introduce the method of  offline reduced-order modeling in details.

\begin{itemize}
 \item First, we divide snapshot data $\bm{S}\in\mathbb{R}^{q\times (M+1)}$ into $b$ blocks by row and the size of each block is $\frac{q}{b}\times (M+1)$. For the convenience of elaboration,  let $b=3$. Then
 \[
 \bm{S}=
\left[ \begin{array}{c}
  \bm{S}^{1}\\
  \bm{S}^{2}\\
\bm{S}^{3}
  \end{array}
  \right ].
 \]
 \item  Next, we perform a low-rank decomposition for each block $\bm{S}^{i}$. Take the SVD of $\bm{S}^{i}$,
 \[
 \bm{S}^{i}=\bm{W}\bm{\Sigma}\bm{V}^{*}=\sum_{j=1}^{p}\bm{w}_{j}\sigma_{j}\bm{v}_{j}^{*},
 \]
 where $\bm{W}=[\bm{w}_{1},\bm{w}_{2},\dots,\bm{w}_{k}]\in\mathbb{R}^{q\times p}$ and $\bm{V}=[\bm{v}_{1},\bm{v}_{2},\dots,\bm{v}_{k}]\in\mathbb{R}^{(M+1)\times p}$ are orthonormal, $\bm{\Sigma}\in \mathbb{R}^{p\times p}$ is diagonal, and $p=\min(q,M+1)$. The $r$ dominant left singular vectors are of interest, we take
 \[
 \bm{Q}^{i}=[\bm{w}_{1},\dots,\bm{w}_{r}], \ \ \ \  \bm{B}^{i}=(\bm{Q}^{i})^{*}\bm{S}^{i},
 \]
 where $\bm{Q}^{i}\in \mathbb{R}^{\frac{q}{b}\times r}$, $\bm{B}^{i}\in \mathbb{R}^{r\times (M+1)}$ and  $r\leqslant\min(\frac{q}{b}, M+1)$,
  \[
 \bm{S}\approx
\left[ \begin{array}{c}
  \bm{Q}^{1}\bm{B}^{1}\\
  \bm{Q}^{2}\bm{B}^{2}\\
\bm{Q}^{3}\bm{B}^{3}
  \end{array}
  \right ]=
  \text{diag}(\bm{Q}^{1},\bm{Q}^{2},\bm{Q}^{3})\left[ \begin{array}{c}
\bm{B}^{1}\\
\bm{B}^{2}\\
\bm{B}^{3}
  \end{array}
  \right ]
  =\bm{\widetilde{Q}}\bm{\widetilde{B}}.
 \]
  Here the size of matrix $\bm{\widetilde{B}}$ is $(b\times r)\times (M+1)$. If we divide more blocks, $b\times r$ is still very large. In this situation, we can perform low-rank decomposition on $\bm{\widetilde{B}}$ and obtain
 \[
 \bm{\widetilde{B}}=\bm{\widehat{Q}}\bm{B}.
 \]
 The offline projection operator $\bm{Q}_{\text{off}}\in\mathbb{R}^{q\times r}$ can be formed as
 \[
\bm{Q}_{\text{off}}= \bm{\widetilde{Q}}\bm{\widehat{Q}}.
 \]
In the end, we get
 \[
 \bm{S}\approx\bm{Q}_{\text{off}}\bm{B},
 \]
 where $\bm{Q}_{\text{off}}\in\mathbb{R}^{q\times r}, \bm{B}\in\mathbb{R}^{r\times (M+1)}$, $r\leqslant\min(q,M+1)$ and $\bm{Q}_{\text{off}}^{*}\bm{Q}_{\text{off}}=\bm{I}$.\\
 \item Finally, we get low-dimensional snapshot data $\bm{B}=[b_{0},b_{1},\cdots,b_{M}]$.
  \end{itemize}

 Therefore, the  $\bm{K}^{t_{k+1},t_{k}}$ by the EDMD method on the high-dimensional local snapshots $\bm{S}_{k}=[\bm{g}(\bm{z}^{k}),\bm{g}(\bm{z}^{k+1}),\cdots,\bm{g}(\bm{z}^{k+m})]$ can be constructed by using  the low-dimensional local snapshots $\bm{B}_{k}=[\bm{b}_{k},\bm{b}_{k+1},\cdots,\bm{b}_{k+m}]$. Next, we show the relationship between the linear operator  using low dimensional data and the Koopman operator.

 We divide these data $\bm{B}_{k}$ into two matrices $\bm{B}_{k,\bm{X}}\in\mathbb{R}^{r\times m}$ and $\bm{B}_{k,\bm{Y}}\in\mathbb{R}^{r\times m}$,
 \[
 \begin{split}
 \bm{B}_{k,\bm{X}}&=[b_{k},b_{k+1},\cdots,b_{k+m-1}]\\
 \bm{B}_{k,\bm{Y}}&=[b_{k+1},b_{k+2},\cdots,b_{k+m}]\\
 \end{split}
 \]
 And then we solve the least-squares problem as follows
 \begin{equation}\label{least-sq-c}
\bm{A}_{\bm{B}}^{t_{k+1},t_{k}}=\mathop{\text{argmin}}\limits_{\widehat{\bm{A}}}\|\bm{B}_{k,\bm{Y}}-\widehat{\bm{A}}\bm{B}_{k,\bm{X}}\|_{F}^{2}=
\bm{B}_{k,\bm{Y}}\bm{B}_{k,\bm{X}}^{\dagger}.
\end{equation}
The least-squares problem (\ref{least-sq-c}) using the  low-dimensional data $\bm{B}$ corresponds to the least-squares problem (\ref{least-sq-fu}) using the high-dimensional data $\bm{S}$. Since $\bm{Q}_{\text{off}}^{*}\bm{Q}_{\text{off}}=\bm{I}$, we have
 \[
  \begin{split}
 \bm{A}_{\bm{B}}^{t_{k+1},t_{k}}&=\mathop{\text{argmin}}\limits_{\widehat{\bm{A}}}\|\bm{B}_{k,\bm{Y}}-\widehat{\bm{A}}\bm{B}_{k,\bm{X}}\|_{F}^{2}\\
 &=\mathop{\text{argmin}}\limits_{\widehat{\bm{A}}}\|\bm{Q}_{\text{off}}\bm{B}_{k,\bm{Y}}-\bm{Q}_{\text{off}}\widehat{\bm{A}}\bm{B}_{k,\bm{X}}\|_{F}^{2}\\
 &=\mathop{\text{argmin}}\limits_{\widehat{\bm{A}}}\|\bm{Q}_{\text{off}}\bm{B}_{k,\bm{Y}}-\bm{Q}_{\text{off}}\widehat{\bm{A}}\bm{Q}_{\text{off}}^{*}\bm{Q}_{\text{off}}\bm{B}_{k,\bm{X}}\|_{F}^{2}\\
 &=\mathop{\text{argmin}}\limits_{\widehat{\bm{A}}}\|\bm{Y}-\bm{Q}_{\text{off}}\widehat{\bm{A}}\bm{Q}_{\text{off}}^{*}\bm{X}\|_{F}^{2}.
  \end{split}
 \]
 Therefore, the matrix $\bm{K}^{t_{k+1},t_{k}}\approx\bm{Q}_{\text{off}}\bm{A }_{\bm{B}}^{t_{k+1},t_{k}}\bm{Q}_{\text{off}}^{*}$. For autonomous continuous time dynamical  systems, the generator $\bm{L}$ of Koopman operator semigroup satisfies $\bm{K}^{t_{k+1},t_{k}}=\exp(\bm{L}\Delta t)$. We can approximate the generator $\bm{L}$ of the Koopman operator semigroup by $\bm{L}\approx\frac{\bm{K}^{t_{k+1},t_{k}}-\bm{I}}{\Delta t}$. Since we assume that the nonautonomous system (\ref{continum-ODE}) is time invariant in a local short period of time, we have,
 \[
   \left\{
 \begin{aligned}
\frac{d\bm{g}(\bm{z})}{dt}&=\bm{L}^{t_{k+1},t_{k}}\bm{g}(\bm{z}),\ \ \ \forall t\in (t_{k},t_{k+m}]\\
\bm{g}(\bm{z}({t_{k}}))&=\bm{g}(\bm{z}_{k}),\\
 \end{aligned}
 \right.
 \]
 where $\bm{L}^{t_{k+1},t_{k}}\approx\frac{\bm{K}^{t_{k+1},t_{k}}-\bm{I}}{\Delta t}$.
 So the trajectory in the observation function space $\mathcal{G}(\mathcal{M})$ using the generator of the Koopman operator semigroup as
 \[
 \bm{g}(\bm{z}(t))=\bm{L}^{t_{k+1},t_{k}}(t-t_{k-1})\bm{g}(\bm{z}_{k}),  \ \ \ \forall t\in (t_{k},t_{k+m}].
 \]
 The trajectory in the state space $\mathcal{M}$ is then obtained by
  \[
\bm{z}(t)=\bm{g}^{-1}(\bm{g}(\bm{z}(t)),  \ \ \ \forall t\in (t_{k},t_{k+m}],
 \]
 where the inverse function  $\bm{g}^{-1}: \bm{g}(\bm{z})\rightarrow\bm{z}$,  is in the sense of least-squares if $\bm{g}$ is not invertible.
  The work  \cite{Li2021} presents a deep learning method to find the map $\bm{g}^{-1}$ from data.
 Figure \ref{flowchart_offline} illustrates  the flow chart of using Koopman operator to realize the state trajectory prediction of the dynamical  system.

 \begin{figure}[H]
  \centering
  % Requires \usepackage{graphicx}
  \includegraphics[trim=120 0 1 0,width=8in,height=4.6in]{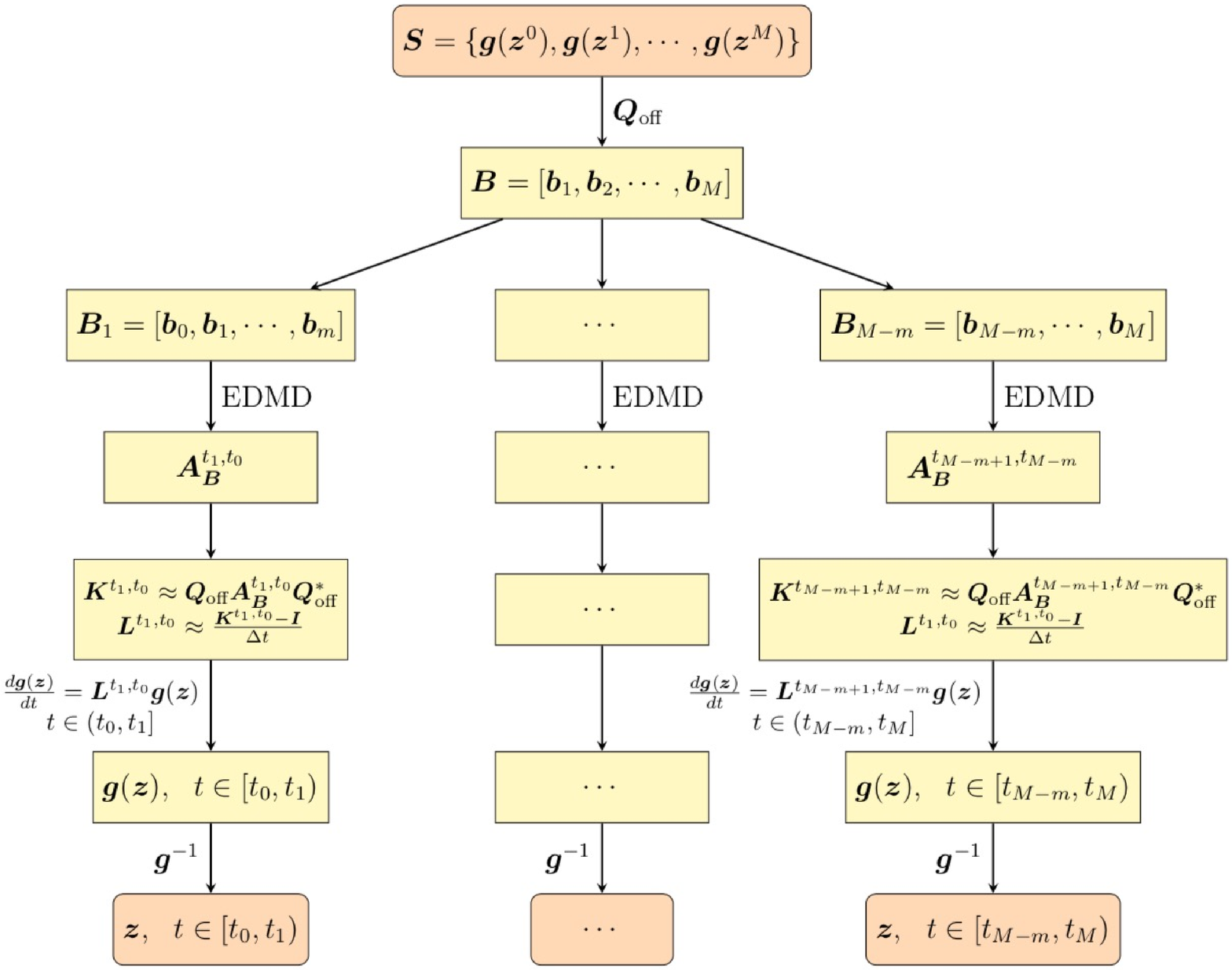}
\caption{The flow chart of offline trajectory prediction}
\label{flowchart_offline}
\end{figure}

\subsection{Online reduced-order modeling}

In this section, we introduce three methods to numerically approximate the Koopman operator $\mathcal{K} ^{t_{N},t_{M}}$ when new observation data $\bm{g}(\bm{z}_{N})$ is available in real time and the projection operator for reduced-order modeling obtained from offline snapshot data does not match the new data well. The first method is to obtain the online projection operator $\bm{Q}_{\text{on}}$ and new low dimensional data $ \bm{B}_{\text{new}}$ simultaneously by directly solving the low rank decomposition least-squares problem of the new data matrix. This method can achieve good  accuracy, but the computation cost is expensive. The second method is a combination of offline and online methods. The offline projection operator is first used for the new data to obtain the corresponding low-dimensional data, and then the online projection operator is obtained according to the new low-dimensional data. The computation cost of this method is relatively cheap. Combining the advantages of the two methods, the third method adaptively selects the reduced-order modeling method according to the residual of data matrix decomposition.

In practical problems, we may get new observation data $\bm{g}(\bm{z}_{N})$ in real time. In order to numerically compute  Koopman operator $\mathcal{K} ^{t_{N},t_{M}}$, we use the following local stencil snapshots,
\[
\bm{S}_{\text{new}}=[\bm{g}(\bm{z}^{M-m+1}),\cdots,\bm{g}(\bm{z}^{M}),\bm{g}(\bm{z}^{N})].
\]
We use the backward-positioned stencil only for technical reasons, because otherwise there is no data for the new local snapshot. If $t_{N}-t_{M}=Dt=(N-M)\Delta t$, we take $m$ observation data as $\bm{S}_{\text{new}}$ in time step $Dt$. Similarly, we decompose the high-dimensional data $\bm{S}_{\text{new}}$ to obtain the online projection operator $\bm{Q}_{\text{on}}$ and the low-dimensional data $ \bm{B}_{\text{new}}$, and apply the EDMD method to the low-dimensional data to obtain the projection $\bm{A}^{t_{N},t_{M}}$ of the matrix $\bm{K} ^{t_{N},t_{M}}$ on the low-dimensional space, where $\bm{K} ^{t_{N},t_{M}}=\bm{Q}_{\text{on}}\bm{A}^{t_{N},t_{M}}\bm{Q}_{\text{on}}^{*}$.

\subsubsection{Fully online reduced-order modeling}
Similar to the offline reduced-order modeling, for the local snapshot matrix $\bm{S}_{\text{new}}\in\mathbb{R}^{q\times m}$ with  new observation data, we hope to find the online projection operator $\bm{Q}_{\text{on}}\in\mathbb{R}^{q\times r}$ and the new low-dimensional data $ \bm{B}_{\text{new}}\in\mathbb{R}^{r\times m}$, such that  $\bm{Q}_{\text{on}}\bm{B}_{\text{new}}$ can approximate  the data matrix  $\bm{S}_{\text{new}}$ well, and also expect that  $\bm{Q}_{\text{on}}^{*}\bm{Q}_{\text{on}}=\bm{I}$. Therefore, we consider the following constrained minimization problem,
 \begin{equation}\label{onlineQ_fully}
 [\bm{Q}_{\text{on}}, \bm{B}_{\text{new}}]=\mathop{\text{argmin}}\limits_{\bm{Q}, \bm{B}}\|\bm{S}_{\text{new}}-\bm{Q}\bm{B}\|_{F}^{2}\ \ \ s.t. \ \ \bm{Q}^{*}\bm{Q}=\bm{I}.
 \end{equation}
By the work \cite{Eckart1936}, the  $\bm{Q}_{\text{on}}$ corresponding to minimization problem (\ref{onlineQ_fully}) is composed of the first $r$ dominant left singular vectors of $\bm{S}_{\text{new}}$ and  $\bm{B}_{\text{new}}=\bm{Q}_{\text{on}}^{*}\bm{S}_{\text{new}}$.

 The residual of this decomposition  is
\[
\|\bm{S}_{\text{new}}-\bm{Q}\bm{B}\|_{F}^{2}=\sum_{i=r+1}^{m}\sigma_{i}^{2}(\bm{S}_{\text{new}}),
\]
where $\sigma_{i}(\bm{S}_{\text{new}})$ represents the $i$-th singular value of $\bm{S}_{\text{new}}$. The singular values are sorted in descending order.

 Computation complexity  significantly impacts on the reduced-order modeling. We now give the computation complexity  of $\bm{Q}_{\text{on}}$ and $\bm{B}_{\text{new}}$ in the above method. Because matrix $\bm{S}_{\text{new}}$ has size $q\times m$ and $q>m$, we use truncated SVD to get $\bm{Q}_{\text{on}}$. In order to get $\bm{Q}_{\text{on}}$, the computation complexity  (measured by the number of multiplication operations) \cite{Li2019} is
 \[
 \bm{C}_{\bm{Q}_{\text{on}}}=\mathcal{O}(2qm^2+m^3+m+qm)
 \]
and the computation complexity of  $ \bm{B}_{\text{new}}$ is
 \[
 \bm{C}_{\bm{B}_{\text{new}}}=\mathcal{O}(rqm).
 \]

\subsubsection{Semi-online reduced-order modeling}

In order to avoid the expensive updates $\bm{Q}_{\text{on}}$ and $\bm{B}_{\text{new}}$, we propose  an efficient  method to obtain $\bm{Q}_{\text{on}}$ and $\bm{B}_{\text{new}}$. We combine the offline projection operator to obtain $\bm{B}_{\text{new}}$, and then construct the online projection operator by minimizing  the decomposition error. Therefore, we call it semi-online reduced-order modeling.

Firstly, the new low-dimensional data $\bm{b}_{N}$ is obtained by using the offline projection operator $\bm{Q}_{\text{off}}$,
\begin{equation}\label{online_b_c}
\bm{b}_{N}=\bm{Q}_{\text{off}}^{*}\bm{g}(\bm{z}^{N}).
\end{equation}
The low-dimensional local stencil snapshots  $\bm{B}_{\text{new}}$ can be obtained only manipulating the new data $\bm{g}(\bm{z}^{N})$,
 \[
 \bm{B}_{\text{new}}=[\bm{b}_{M-m+1},\cdots,\bm{b}_{M},\bm{b}_{N}],
 \]
where $\bm{b}_{N}$  is updated by E.q.(\ref{online_b_c}) and $ [\bm{b}_{M-m+1},\cdots,\bm{b}_{M}]$  come from offline low-dimensional snapshot data $\bm{B}=[b_{0},b_{1},\cdots,b_{M}]$.

Next, we obtain the online projection operator $\bm{Q}_{\text{on}}$ by solving the following constrained least squares problem,
 \begin{equation}\label{onlineQ}
 \bm{Q}_{\text{on}}=\mathop{\text{argmin}}\limits_{\bm{Q}}\|\bm{S}_{\text{new}}-\bm{Q}\bm{B}_{\text{new}}\|_{F}^{2}\ \ \ s.t. \ \ \bm{Q}^{*}\bm{Q}=\bm{I}.
 \end{equation}
The problem (\ref{onlineQ}) is known as the orthogonal Procrustes problem. Zou et al \cite{Zou2006} gave a solution to this problem. Let the SVD of $\bm{S}_{\text{new}}\bm{B}_{\text{new}}^{*}$ be $\bm{UDL^{*}}$. Then  the solution of problem (\ref{onlineQ}) is obtained by
 \begin{equation}\label{online_Q_c}
 \bm{Q}_{\text{on}}=\bm{UL^{*}}.
 \end{equation}

Now we check  the computation complexity to compute  $\bm{B}_{\text{new}}$ and $\bm{Q}_{\text{on}}$ from equations (\ref{online_b_c}) and (\ref{online_Q_c}), respectively. Similarly, since matrix $\bm{S}_{\text{new}}\bm{B}_{\text{new}}^{*}$ has size $q\times r$ and $q>r$, we use truncated SVD to get $\bm{Q}_{\text{on}}$. The computation complexity  of $\bm{Q}_{\text{on}}$,
  \[
 \bm{C}_{\bm{Q}_{\text{on}}}=\mathcal{O}(2qr^2+r^3+r+qr+qmr),
 \]
 and the computation complexity of $\bm{B}_{\text{new}}$,
  \[
 \bm{C}_{\bm{B}_{\text{new}}}=\mathcal{O}(rq).
 \]
Because $r\leqslant m\ll q$, the computation complexity  of semi-online reduced-order modeling is lower than the fully online reduced-order modeling.

%%%%%%%%%%%%%%%%%%%%%%%%%%%%%%%%%%%%%%%%%%%%%%%%%%%%%%%%%%%%%%%%%%%%%%%
\subsubsection{Adaptive online reduced-order modeling}

 When new data is available, we  combine the advantages of fully online reduced-order modeling and semi-online reduced-order modeling  and propose an adaptive method to update $ \bm{Q}_{\text{on}}$ and $\bm{B}_{\text{new}}$ according to the residual of decomposition.  This adaptive method can  avoid expensive updating $ \bm{Q}_{\text{on}}$ and $\bm{B}_{\text{new}}$ every  time.

Because the fully online reduced-order modeling can achieve good modeling accuracy, but the computation cost is expensive.  The semi-online reduced-order modeling is relatively cheap. Therefore, when the projection operator for reduced-order modeling needs to be updated online when new real-time observation data is available, the semi-online reduced-order modeling is preferred. Figure \ref{flowchart_adaptive} shows the flow chart of the adaptive online reduced-order modeling. Firstly, a tolerance threshold $\epsilon$ of decomposition error is given. When the new observation data is available, the offline projection operator acts on the new data. If the decomposition error is larger than the threshold, then semi-online reduced-order modeling method is applied. If the new decomposition error obtained by the semi-online reduced-order modeling is still larger than the threshold, the fully reduced-order modeling is adopted. At this time, to further improve the accuracy, the number of basis functions of the projection operator is often increased when using the fully reduced-order modeling. The detailed procedure for the adaptive method  is presented in Algorithm \ref{algorithm-2}.

 \begin{figure}[H]
  %\centering
  % Requires \usepackage{graphicx}
  \includegraphics[trim=70 0 1 0,width=7in,height=4.7in]{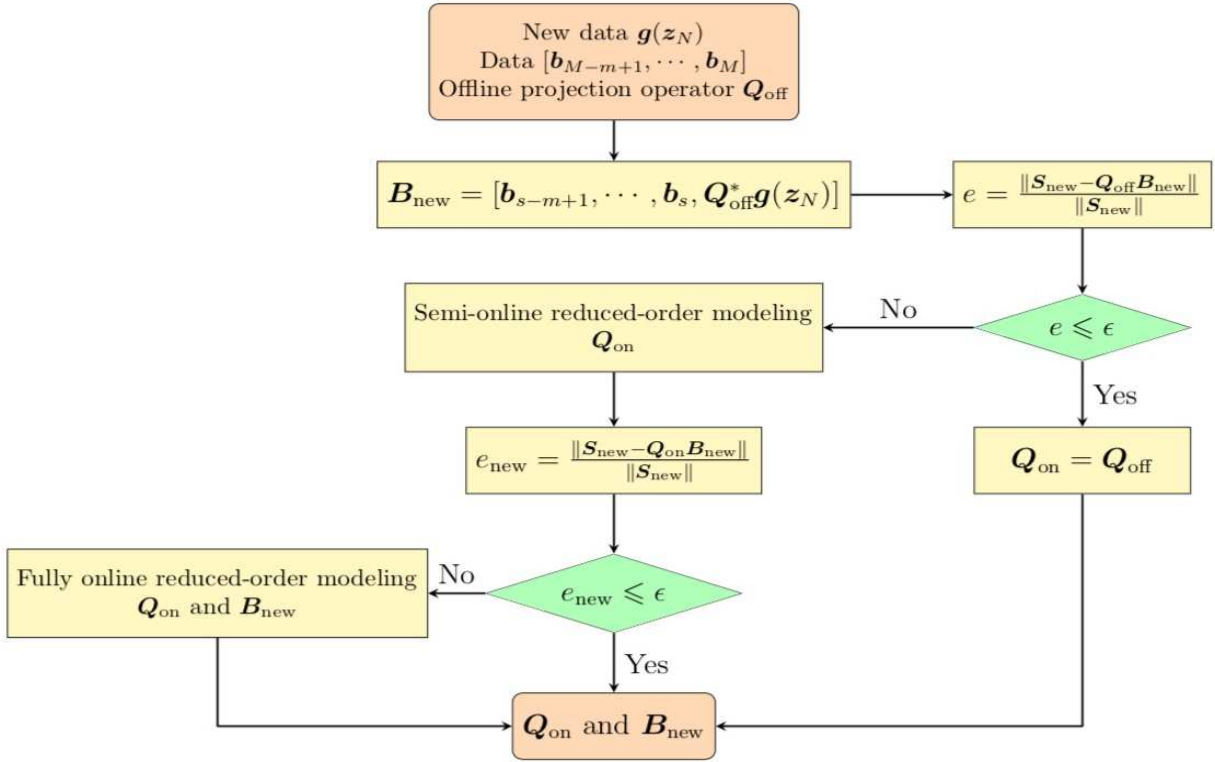}
\caption{The flow chart of adaptive online reduced-order modeling }
\label{flowchart_adaptive}
\end{figure}

 \begin{algorithm}[H]
\caption{Adaptive online reduced-order modeling}
\textbf{Input}: The local snapshot data $\bm{S}_{\text{new}}=[\bm{g}(\bm{z}^{M-m+1}),\cdots,\bm{g}(\bm{z}^{M}),\bm{g}(\bm{z}^{N})]$, offline projection operator $\bm{Q}_{\text{off}}$ and low-dimensional data $\bm{B}_{\text{off}}=[b_{0},\cdots,b_{M}]$, threshold $\epsilon>0$, target-rank $r$, number of supplementary basis functions $r_{a}$.\\
\textbf{Output}: The matrix $\bm{K}^{t_{N},t_{M}}$, which is an approximation of the Koopman operator $\mathcal{K} ^{t_{N},t_{M}}$.\\
~1:~ Compute $b_{N}=\bm{Q}_{\text{off}}^{*}\bm{g}(\bm{z}_{N})$, So $\bm{B}_{\text{new}}=[\bm{b}_{M-m+1},\cdots,\bm{b}_{M},\bm{b}_{N}]$\\
~2:~ Calculate relative error $e=\frac{\|\bm{S}_{\text{new}}-\bm{Q}_{\text{off}}\bm{B}_{\text{new}}\|}{\|\bm{S}_{\text{new}}\|}$\\
~3:~ Take SVD of $\bm{S}_{\text{new}}$ and $\bm{S}_{\text{new}}\bm{B}_{\text{new}}^{*}$: $\bm{S}_{\text{new}}=\bm{W}\bm{\Sigma}\bm{V}^{*}$ and $\bm{S}_{\text{new}}\bm{B}_{\text{new}}^{*}=\bm{UDL^{*}}$\\
~4:~ If $e\leqslant\epsilon$ do\\
$ ~~~~~~~~~~~~~ $ Set  $\bm{Q}_{\text{on}}=\bm{Q}_{\text{off}}$\\
$ ~~~~~~ $else do\\
$ ~~~~~~~~~~~~~ $Set $\bm{Q}_{\text{on}}=\bm{UL^{*}}$\\
$ ~~~~~~~~~~~~~ $The relative error $e_{\text{new}}=\frac{\|\bm{S}_{\text{new}}-\bm{Q}_{\text{on}}\bm{B}_{\text{new}}\|}{\|\bm{S}_{\text{new}}\|}$\\
$ ~~~~~~~~~~~~~ $ If $e_{\text{new}}>\epsilon$ do\\
$ ~~~~~~~~~~~~~~~~$ Set $r_{\text{new}}=r+r_{a}$\\
$ ~~~~~~~~~~~~~~~~$ Set $\bm{Q}_{\text{on}}=[\bm{w}_{1},\dots,\bm{w}_{r_{\text{new}}}],  \bm{B}_{\text{new}}=\bm{Q}_{\text{on}}^{*}\bm{S}_{\text{new}}$\\
$ ~~~~~~~~~~~~~ $end\\
$ ~~~~~~ $end\\
~5:~ Let $ \bm{B}_{\text{new},\bm{X}}=[b_{M-m+1},\cdots,b_{M-1}, b_{M}],\ \ \bm{B}_{\text{new},\bm{Y}}=[b_{M-m+2},\cdots,b_{M},b_{N}]$\\
~6:~ Set $\bm{A}^{t_{N},t_{M}}=\bm{B}_{\text{new},\bm{Y}}\bm{B}_{\text{new},\bm{X}}^{\dagger}$\\
~7:~ Finally,  $\bm{K} ^{t_{N},t_{M}}=\bm{Q}_{\text{on}}\bm{A}^{t_{N},t_{M}}\bm{Q}_{\text{on}}^{*}$
   \label{algorithm-2}
\end{algorithm}

\section{Numerical results}
In this section, we present some numerical examples  for the nonlinear nonautonomous dynamical  systems by using the proposed  reduced-order modeling. For the new observation data available in real time, we compare the different online methods to model the evolution of state with respect to time. In Section \ref{ns1}, for nonautonomous dynamical  systems, we present a fully data-driven method to realize trajectory prediction. When new real-time observation data are obtained, we compare the trajectory prediction  of  fully online, semi-online and adaptive online reduced-order modeling. In Section \ref{ns2}, we collect the snapshots data  from a porous media model and compare the CPU time of  the different methods.  In Section \ref{ns3}, we show the numerical results of trajectory prediction of multiscale p-Laplacian equation with data from spatial fine-scale and spatial coarse-scale. Two high contrast coefficient  fields $\kappa(x)$ used in the numerical examples are shown in Figure \ref{coeff}.
\begin{figure}[hbtp]
\centering
\subfigure[$\kappa_{1}(x)$]{\includegraphics[width=2.5in, height=2.5in]{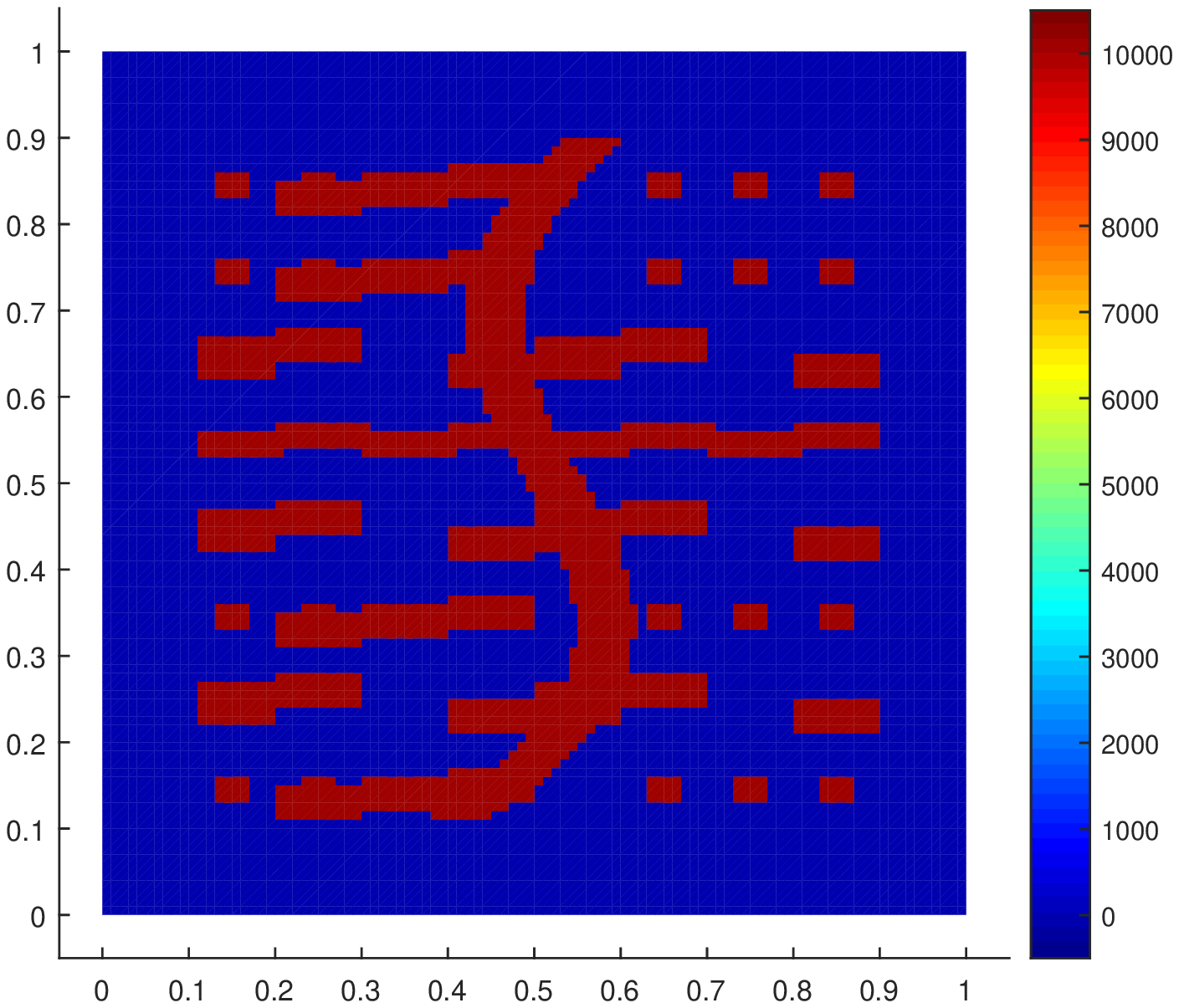}}
\subfigure[$\kappa_{2}(x)$]{\includegraphics[width=2.5in, height=2.5in]{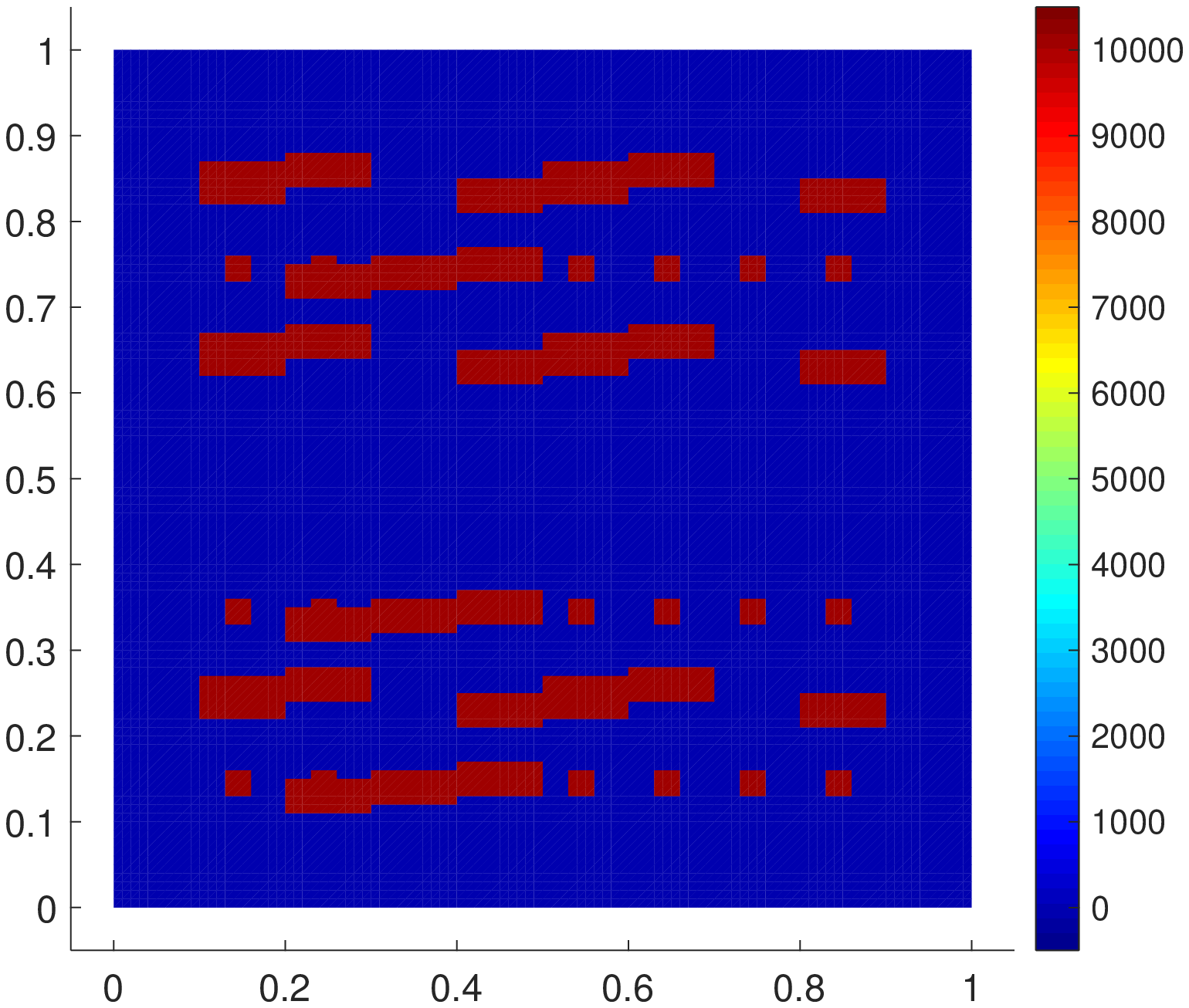}}
\caption{ Two coefficient fields}
\label{coeff}
\end{figure}

\subsection{Discrete nonautonomous dynamical  system}\label{ns1}
In this subsection, we consider a low-dimensional nonautonomous dynamical  system to illustrate the performance of the proposed method. Because the state of nonautonomous dynamical  system changes with respect to time, a large error may occur  when using the offline projection operator to obtain the new observation data. Therefore, it is very necessary to update projection operator in real time based on the new observation data. This example focuses on the numerical results of different online reduced-order modeling methods for new observation data. We consider the following nonlinear nonautonomous dynamical  system,
\begin{equation}\label{Fixed-point}
\left\{
\begin{aligned}
\bm{x}_{1,n+1}&=\bm{A}_{1}(t_{n}) \bm{x}_{1,n}\\
\bm{x}_{2,n+1}&=\bm{A}_{2}(t_{n})(\bm{x}_{2,n}-\bm{x}_{1,n}^{2}),
\end{aligned}
\right.
\end{equation}
where the subscript $n$ represents at time $t_{n}=n\triangle t$,  $\bm{x}_{1,n}=[x_{1,1}(t_{n}),x_{2,1}(t_{n}),x_{3,1}(t_{n})]^{T}, \bm{x}_{2}=[x_{1,2}(t_{n}),x_{2,2}(t_{n}),x_{3,2}(t_{n})]^{T},\bm{x}_{1}^{2}=[x_{1,1}^{2}(t_{n}),x_{2,1}^{2}(t_{n}),x_{3,1}^{2}(t_{n})]^{T}$ and the initial state is given by $\bm{x}_{1,0}=[0.4,0.6,0.05]$ and $\bm{x}_{2,0}=[0.01,0.02,0.001]$, and
\[
\begin{split}
\bm{A}_{1}(t_{n})&=\text{diag}(\alpha_{1,1}(t_{n}),\alpha_{2,1}(t_{n}),\alpha_{3,1}(t_{n}))\\
 \bm{A}_{2}(t_{n})&=\text{diag}(\alpha_{1,2}(t_{n}),\alpha_{2,2}(t_{n}),\alpha_{3,2}(t_{n})).
\end{split}
\]
In particular, we take
\[
\alpha_{i,j}(t)=\beta_{i,j}+a_{i,j}\cos(\omega_{i,j}t)+b_{i,j}\sin(\omega_{i,j}t),\ \ \ i=1,2,3\ \ \ j=1,2.
\]
Because
\[
\left[ \begin{array}{c}
  \bm{x}_{1,n+1}\\
  \bm{x}_{2,n+1}\\
  \bm{x}_{1,n+1}^{2}\\
  \end{array}
  \right ]=\left[ \begin{array}{ccc}
\bm{A}_{1}(t_{n})&0&0\\
  0&\bm{A}_{2}(t_{n})&-\bm{A}_{2}(t_{n})\\
  0&0&\bm{A}_{1}^{2}(t_{n})\\
  \end{array}
  \right ]\left[ \begin{array}{c}
  \bm{x}_{1,n}\\
  \bm{x}_{2,n}\\
  \bm{x}_{1,n}^{2}\\
  \end{array}
  \right ],
\]
 we know that $\{\bm{x}_{1},\bm{x}_{2},\bm{x}_{1}^{2}\}$ spans a Koopman invariant subspace of the system (\ref{Fixed-point}). We choose $\{\bm{x}_{1},\bm{x}_{2},\bm{x}_{1}^{2}\}$ as the observation functions. In this example, we use the data in the time interval $[0,0.5]$ as the snapshot data. Figure \ref{data} shows the measurement data of each part of the 6-dimensional dynamical  system (\ref{Fixed-point}) and the new real-time data. The black solid line represents the trajectory of $\bm{x}$ over time, the blue dot represents snapshot data, and the red dot represents new data which is available in real time.

Our numerical experiment is based on the fact that we only have observation data from the previous time, the new observation data is available in real time and the dynamical  model is unknown. If we directly use EDMD method to estimate Koopman operator without knowing the specific model, then we will simulate a wrong trajectory by using the observation data that potentially subject to nonautonomous system, because EDMD is only applicable to the autonomous system. The numerical results are presented in figure \ref{auonomous DMD c}.

In this article, we use a moving time window to update the local stencil snapshots to compute each $ \mathcal{K} ^{t_{i+1},t_{i}}$. This makes our method not only suitable for nonautonomous dynamical  system but also for autonomous dynamical  system. Our proposed method not only use moving time window in time, but also perform spatial dimension reduction. When the dynamical  model changes drastically  respect to time, the offline collected snapshot data can not reflect the overall information of the model, we need to update the projection operator in real time according to the new observation data.

Figure \ref{offline c} shows that when new real-time data is available, the offline projection operator is still used to predict the future trajectory. As we can see from this figure, in the time interval of snapshot data, this method has a good fitting for the reference trajectory, but beyond  this time interval, even if the new observation data is obtained, the state predicted by this method still deviates from the true trajectory.

For nonautonomous dynamical  system, it is necessary to update the projection operator online for new observation data. Figure \ref{update Qce} presents the prediction results using the fully online reduced-order modeling. Although the accuracy is improved, it consumes a lot of computing resources. Figure \ref{update Qcc} shows the trajectory prediction of the semi-online reduced-order modeling, which is a relatively cheap online method. It first uses the offline projection operator to obtain $\bm{B}_{\text{new}}$, and then solves the optimal $\bm{Q}_{\text{on}}$ based on the new low-dimensional data. It can be seen from this figure that this method can also effectively predict the trajectory of the model. Figure \ref{adaptive Qcc} presents the numerical results of updating  the projection operator by the adaptive online method. The adaptive method determines the need of  updating  the projection operator online according to the decomposition residual.

Figure \ref{ex1_error1} shows the relative error of the prediction by different methods in $t\in[0.6,0.7]$. Using the observation data at $t=0.7$, we update projection operator and new low-dimensional data online. From this figure, we can see that real-time updating projection operator and data can achieve  better modeling  accuracy. The fully online reduced-order modeling gives a better  accuracy than the semi-online reduced-order modeling. Adaptive online reduced-order modeling achieves almost the same prediction as the fully
online method.

 \begin{figure}[H]
 \centering
  %\flushleft
  % Requires \usepackage{graphicx}
  \includegraphics[width=7in,height=3.5in]{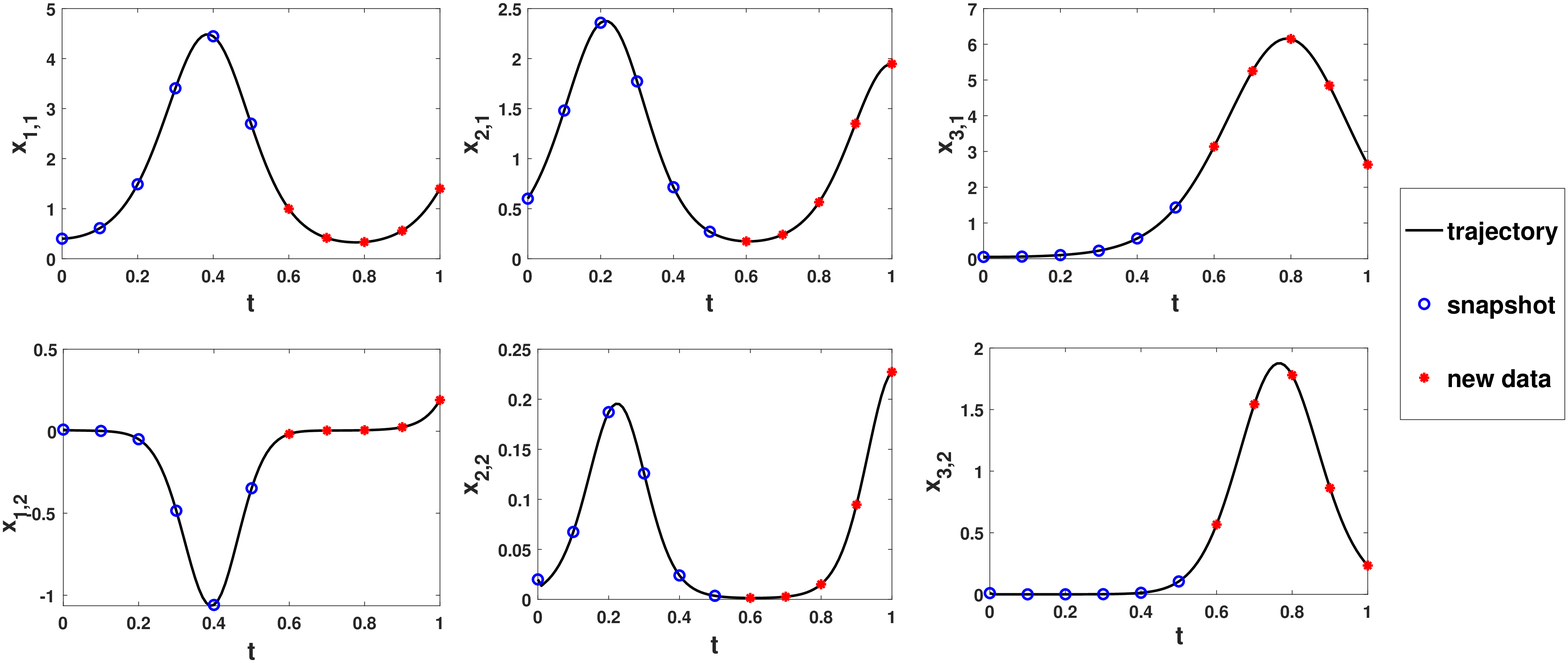}
  \caption{Data: the black solid line represents the trajectory of $\bm{x}$ over time, the blue dot represents snapshot data, and the red dot represents new data which is available in real time.}\label{data}
\end{figure}

 \begin{figure}[H]
  \centering
  %\flushleft
  % Requires \usepackage{graphicx}
  \includegraphics[width=7in,height=3.5in]{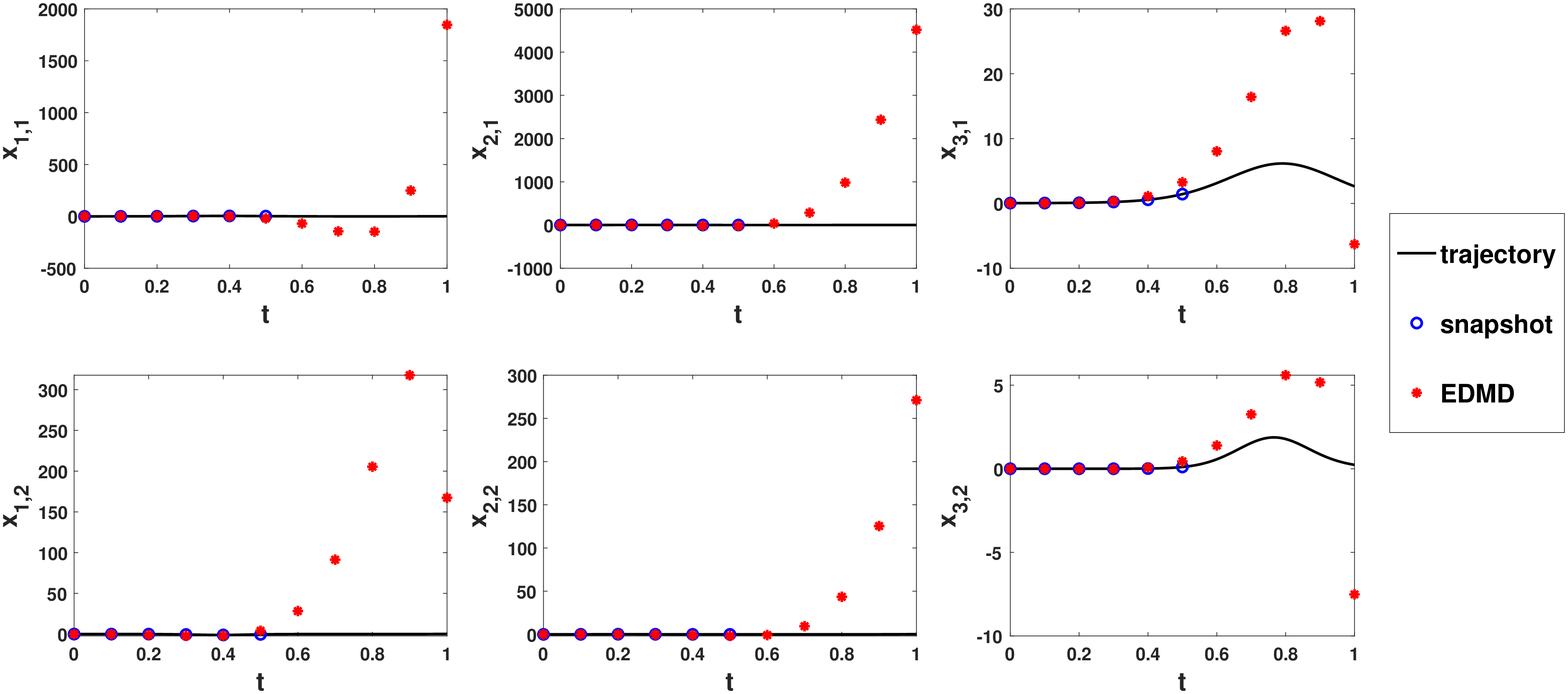}
  \caption{The EDMD method predicts the trajectory of each component of $\bm{x}_{1}$ and $\bm{x}_{2}$. The data in the time interval $[0,0.5]$ as the snapshot data.}
  \label{auonomous DMD c}
\end{figure}

 \begin{figure}[H]
  \centering
  % Requires \usepackage{graphicx}
  \includegraphics[width=7in,height=3.5in]{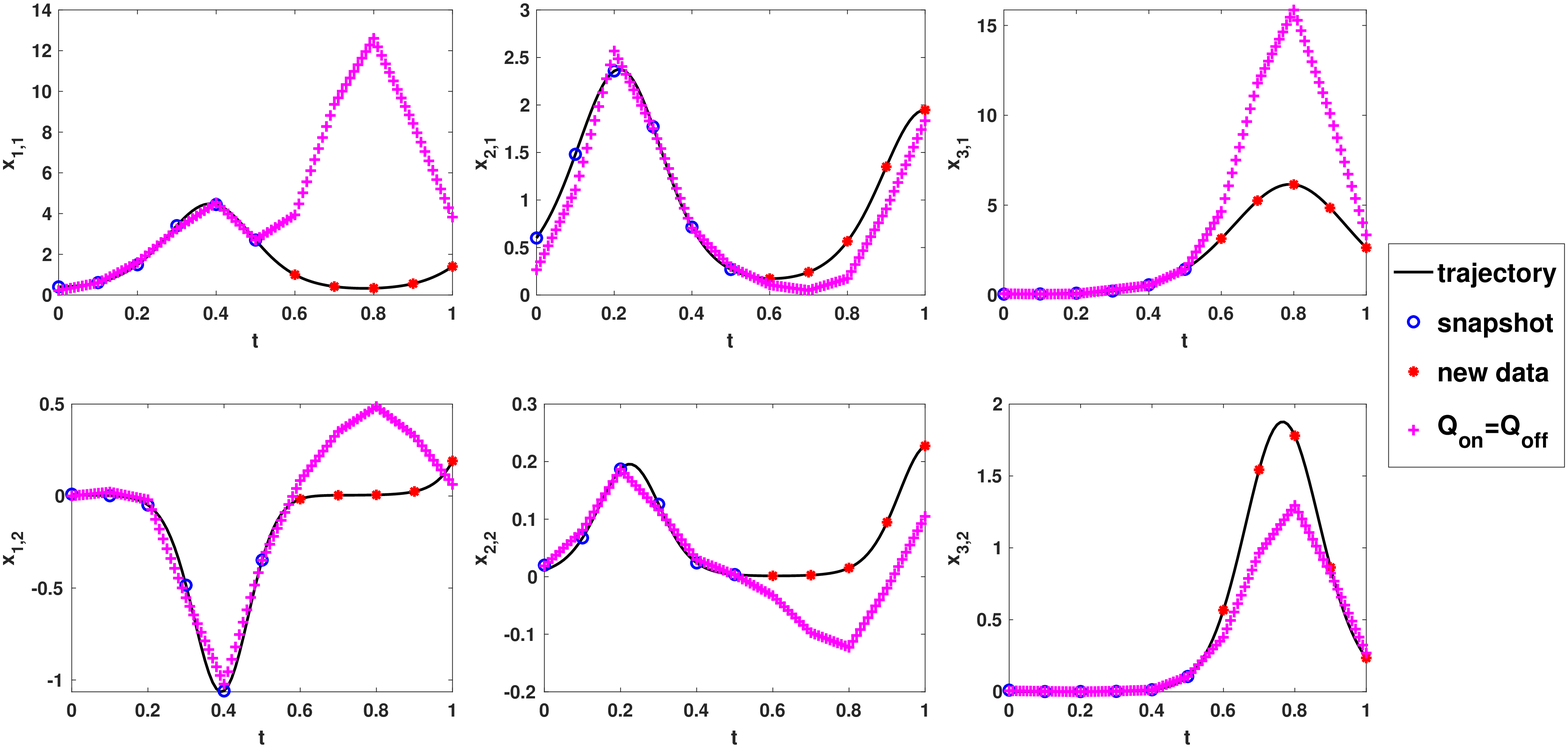}
  \caption{The projection operator $\bm{Q}$ is not updated online ($\bm{Q}_{\text{on}}=\bm{Q}_{\text{off}}$), and the local  stencil snapshots are used to predict the trajectory of each component of $\bm{x}_{1}$ and $\bm{x}_{2}$.}
    \label{offline c}
\end{figure}

 \begin{figure}[hbtp]
  \centering
  % Requires \usepackage{graphicx}
  \includegraphics[width=7in,height=3.5in]{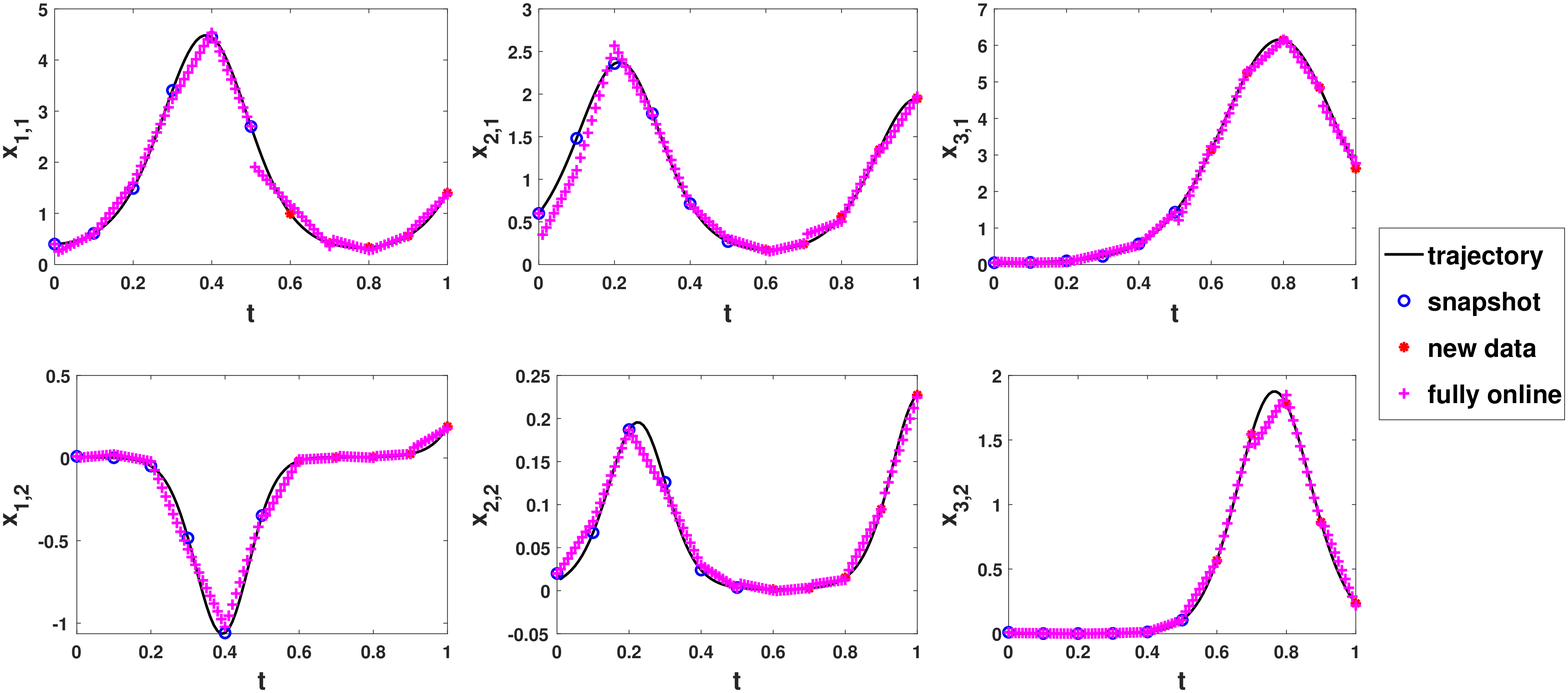}
  \caption{The online projection operator $\bm{Q}_{\text{on}}$ and the low-dimensional data $\bm{B}_{\text{new}}$ are obtained by fully online reduced-order modeling, and the local  stencil snapshots are used to predict the trajectory of each component of $\bm{x}_{1}$ and $\bm{x}_{2}$.}
      \label{update Qce}
\end{figure}

 \begin{figure}[hbtp]
  \centering
  % Requires \usepackage{graphicx}
  \includegraphics[width=7in,height=3.5in]{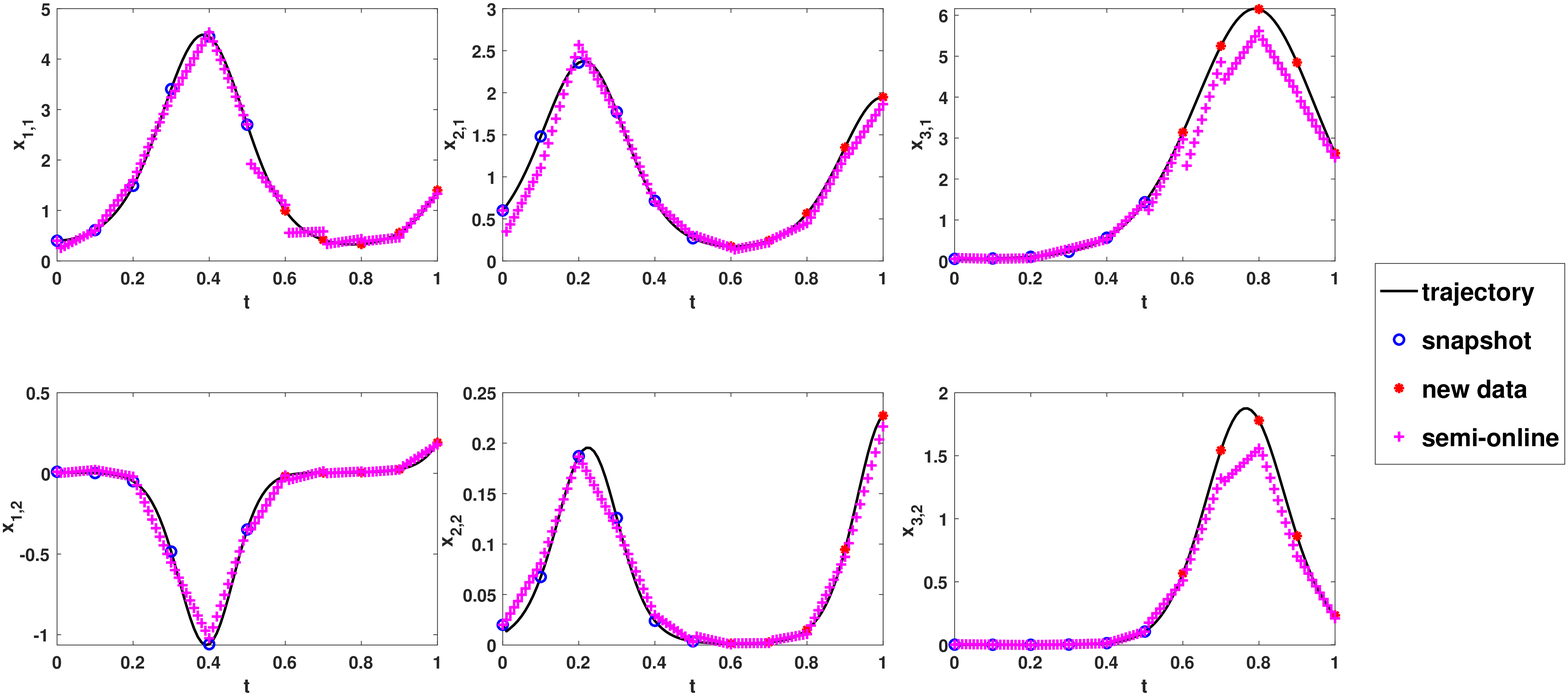}
  \caption{The online projection operator $\bm{Q}_{\text{on}}$ and the low-dimensional data $\bm{B}_{\text{new}}$ are obtained by semi-online reduced-order modeling, and the local  stencil snapshots are used to predict the trajectory of each component of $\bm{x}_{1}$ and $\bm{x}_{2}$. }
  \label{update Qcc}
\end{figure}

 \begin{figure}[hbtp]
  \centering
  % Requires \usepackage{graphicx}
  \includegraphics[width=7in,height=3.5in]{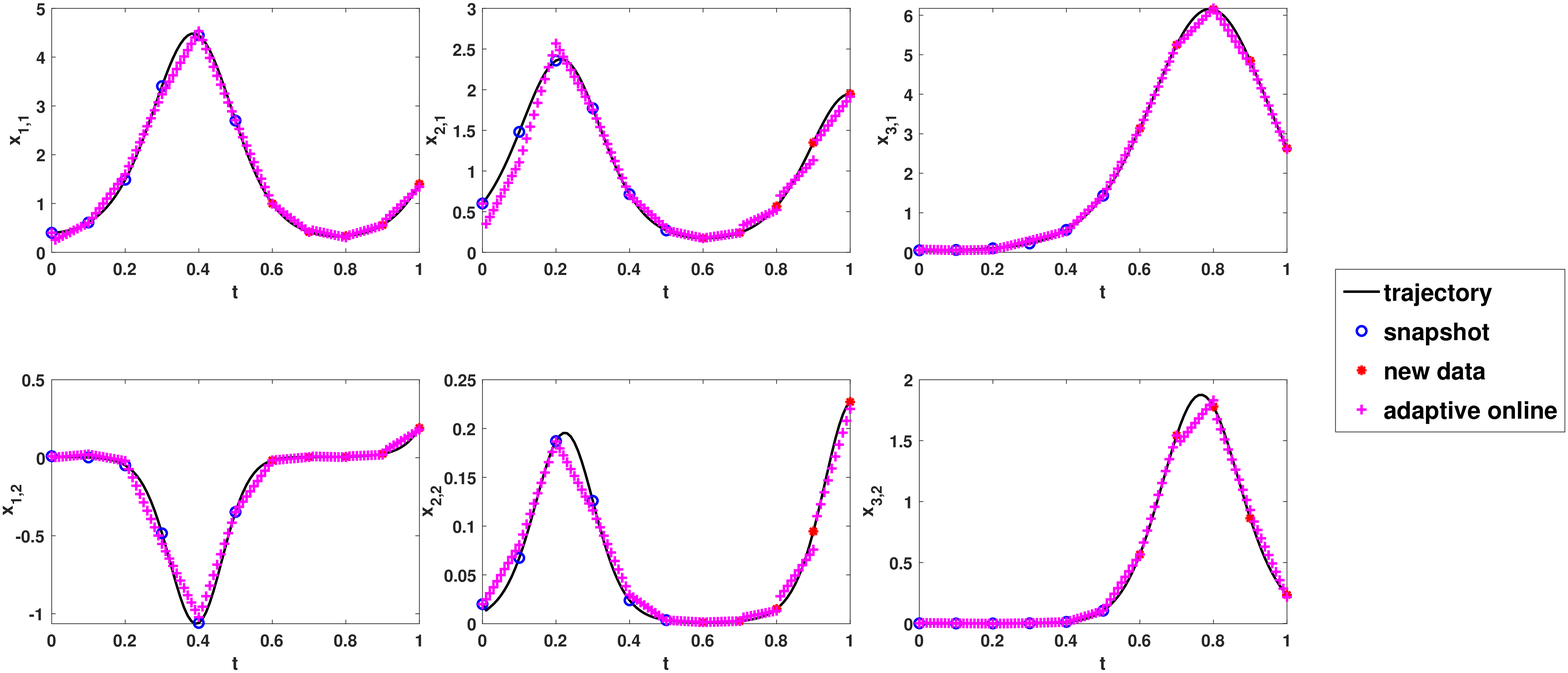}
  \caption{The online projection operator $\bm{Q}_{\text{on}}$ and the low-dimensional data $\bm{B}_{\text{new}}$ are obtained by adaptive online reduced-order modeling, and the local  stencil snapshots are used to predict the trajectory of each component of $\bm{x}_{1}$ and $\bm{x}_{2}$. }
  \label{adaptive Qcc}
\end{figure}

 \begin{figure}[hbtp]
  \centering
  % Requires \usepackage{graphicx}
  \includegraphics[width=3in,height=2.5in]{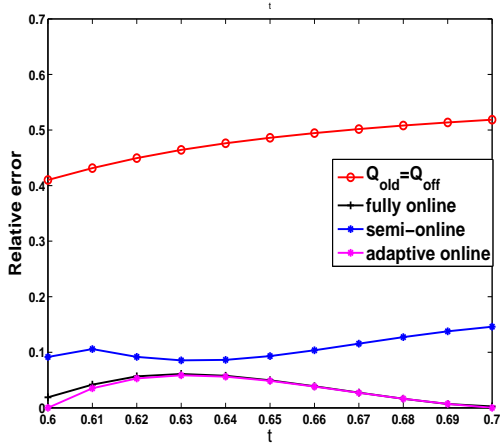}
  \caption{The relative error of different methods in $t\in[0.6,0.7]$. }
  \label{ex1_error1}
\end{figure}

\subsection{The porous media equation}\label{ns2}
In this subsection, we consider the multiscale problem of permeability dependent on time. For example, porous rocks are often characterized by complex textures and mineral compositions,  which typically have heterogeneous structures. Permeability in porous rock is closely related to the porosity of  microstructure. The  porosity may be time dependent and the associated  permeability is time dependent as well. In this example, we consider the snapshot data is subject to the porous media equation. It can model the process including fluid flow and the flow of an isentropic gas \cite{Muskat1937}. In this example, we consider the following porous media equation,
\begin{equation}\label{PME}
  \left\{
 \begin{aligned}
 u_{t} - \text{div} \big(\kappa(x)b(t)|u|^{p-1} \nabla u\big)&=h(x) \ \ \  \ \   \text{in} \ \Omega\times (0,T], \\
 u&=0 \ \ \ \ \ \ \ \ \  \text{on} \ \partial\Omega\times (0,T],\\
 u(\cdot,0)&=u_{0}(x) \ \ \ \ \text{in} \ \Omega.
 \end{aligned}
   \right.
\end{equation}
where $b(t)=(t+0.1)\sin(4t/\pi)$, $h(x)=1$, $p=3$ and $u_{0}(x)=\sin(2\pi x_{1})\sin(2\pi x_{2}).$ The permeability field $\kappa(x)$ is depicted in figure \ref{coeff} (left). In this example, the data in the time interval $t\in [0, 0.5]$ as the snapshots. The system (\ref{PME}) changed drastically with time at first, and then gradually tends to be stable. Therefore, we take the  observation data with time step $\triangle t=0.01$ as the initial  snapshot data, and then take the observation data with time step $\triangle t=0.1$ as the real-time observation data.  As the observed data is available, we use $\bm{Q}_{\text{on}}=\bm{Q}_{\text{off}}$, fully online reduced-order modeling, semi-online reduced-order modeling and adaptive online reduced-order modeling to get online projection operator, respectively. Due to the complexity of the dynamical  system and the high contrast of permeability $\kappa(x)$, we  observe the state of the system in spatial fine-scale, and the  observation function $\bm{g}(\bm{z})=\bm{z}$. Therefore, the spatial dimension of snapshot data $\bm{S}\in\mathbb{R}^{q\times M}$ is much larger than that of time, that is, $q > M$. When the identity function is selected as the observation function,  EDMD method reduces  to be DMD method. The SVD decomposition of high-dimensional data takes a lot of computing resources. In order to improve the computation efficiency,  we use the proposed localized snapshots to perform SVD decomposition on low-dimensional data.

 In Table \ref{ex2-tab1}, we record the CPU time (in seconds) for the different methods. All the simulations run on the same computer (8 GB 1.8GHz). In the offline stage, we compare the CPU time of global low rank decomposition and local low rank decomposition for snapshot data. Here, global low rank decomposition (global LRD) means direct SVD decomposition for snapshot data, the local low rank decomposition (local LRD) refers to  the method described in section \ref{offfline_sdm}, the snapshot data is first divided into spatial blocks and then SVD is made for  each block.  It can be seen from this table that the local LRD can reduce the computation time by one order of magnitude. In the online stage, we compare the fully online reduced-order modeling and semi-online reduced-order modeling for the new local snapshot data to obtain online projection operator and the new low-dimensional data. We find that semi-online reduced-order modeling substantially further  reduces  the computation time.

Table \ref{ex2-tab2} presents the prediction relative errors of different methods using different numbers of online projection basis functions when  the size of the local stencil is fixed. From the table, we find: (1) as the number of online projection basis functions $r$ increases, the error decreases; (2) when the number of online projection basis functions increases to a certain number, the error decreases slowly; (3) when the new observation data is available in real time, if the projection operator is not updated, that is, $\bm{Q}_{\text{on}}=\bm{Q}_{\text{off}}$, the state prediction of unknown time will produce relatively large error even if the number of online projection basis functions is sufficient; (4) The adaptive online projection operator can also achieve better computation accuracy when the number of initial basis functions is small.

Figure \ref{ex2-err} depicts the relative error of trajectory simulation using these methods. The left figure shows the result of the threshold $\epsilon=0.01$, and the right figure shows the result of the threshold $\epsilon=0.21$. It can be seen from this figure that the effect of updating projection operator online is better than that of non-updating. Fully online reduced-order modeling is more accurate than semi-online reduced-order modeling. The adaptive online reduced-order modeling method  updates $\bm{Q}_{\text{on}}$  according to the threshold $\epsilon$. When the error of the local low rank decomposition is less than the threshold, there is no need to update $\bm{Q}_{\text{on}}$. When the error is larger than the threshold $\epsilon$, it selects fully online reduced-order modeling or  semi-online reduced-order modeling to update $\bm{Q}_{\text{on}}$.

In Figure \ref{ex2-solution}, we plot the solutions at time $T= 0.25$, $T = 0.55 $  and $T = 0.95$. From the first column to the fifth column,   they are corresponding to  the reference solution, the solution computed by $\bm{Q}_{\text{on}}=\bm{Q}_{\text{off}}$, fully online, semi-online and adaptive online ($\epsilon=0.01$) reduced-order modeling. From this figure, we find that (1) the solution profiles of the equation change dramatically with respect to time; (2)  there is no clear difference among the solution profiles obtained by fully online, semi-online and adaptive online reduced-order modeling; (3) $T = 0.25$ belongs to the time interval of the snapshot data, so the solutions obtained by these methods can approach the reference solution well; (4) $T=0.55$ and $T=0.95$ lie beyond  the time interval of the snapshot data, if the offline projection operator is still used to predict the future trajectory, there is obvious difference between the  solutions obtained by $\bm{Q}_{\text{on}}=\bm{Q}_{\text{off}}$ and the reference. With the new observation data, using fully online reduced-order modeling or semi-online reduced-order modeling to update $\bm{Q}_{\text{on}}$ online, the obtained solution  well approximates the reference solution.

\begin{table}[hbtp]
\centering
\caption{CPU time (seconds) comparison with $T=1,\Delta t=0.01$}
\vspace{2pt}
\begin{tabular}{|c|c|c|c|c|}
\hline
   \multirow{2}{*}{method} & \multicolumn{2}{|c|}{offline}    & \multicolumn{2}{|c|}{online}   \\ \cline{2-3} \cline{4-5}
    &$\text{global LRD}$ &$\text{local LRD}$&$\text{fully online}$&$\text{semi-online}$\\%& $ GMsFEM energy error$ & $CEM-GMs energy error$ \\
  \hline
  time(s) &$7.843897$ & $0.804604$  &$ 0.672763$ &$0.005242$ \\
  \hline
\end{tabular}
\label{ex2-tab1}
\end{table}

\begin{table}[hbtp]
\centering
\caption{The relative error of different methods at $T=0.65$, $m=5$.}
\vspace{2pt}
\begin{tabular}{|c|c|c|c|c|}
\hline
    $r$&$\bm{Q}_{\text{on}}=\bm{Q}_{\text{off}}$  &$\text{fully online}$  &$\text{semi-online}$\ &$\text{adaptive online}$\\
\hline
    $1$&$0.7965$ &$0.3676$ &$0.5308$&$0.0704$ \\
  \hline
  $2$ & $0.2990$ &$0.0704$&$0.0919$&$0.0303$  \\
  \hline
  $3$ & $0.2557$ &$0.0303$ &$0.0774$ &$0.0244$\\
  \hline
    $4$ & $0.2363$ &$0.0244$ &$0.0728$ &$0.0227$\\
  \hline
    $5$ & $0.2122$ &$0.0227$ &$0.0569$ &$0.0210$\\
  \hline
\end{tabular}
\label{ex2-tab2}
\end{table}

\begin{figure}[hbtp]
\centering
  \includegraphics[trim=70 0 1 0,width=7in,height=3in]{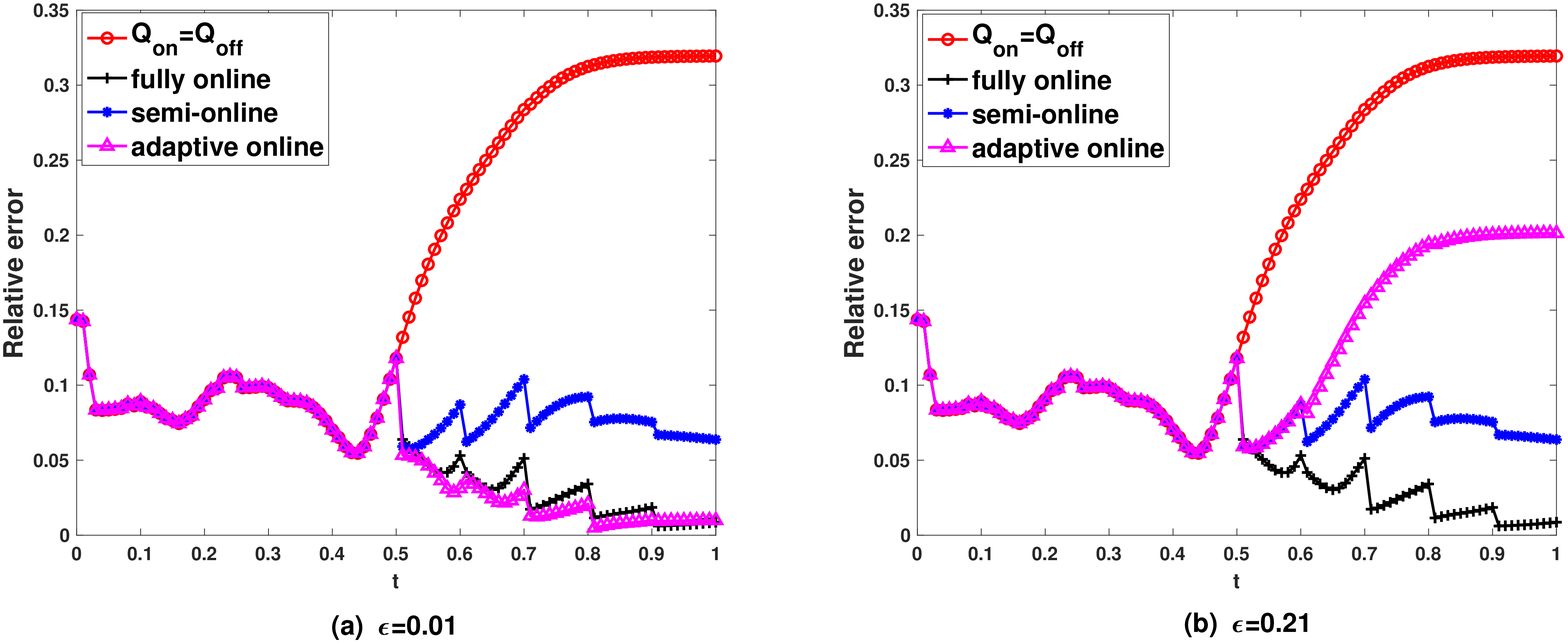}
\caption{ The relative error of solution $u$ at different times, where $\bm{Q}_{\text{on}}$ is obtained by $\bm{Q}_{\text{off}}$, fully online, semi-online and adaptive online reduced-order modeling, respectively. The left figure shows the result of the threshold $\epsilon=0.01$, and the right figure shows the result of the threshold $\epsilon=0.21$.}
\label{ex2-err}
\end{figure}

 \begin{figure}[hbtp]
%  \centering
  \flushleft
  % Requires \usepackage{graphicx}
  \includegraphics[width=7in,height=5in]{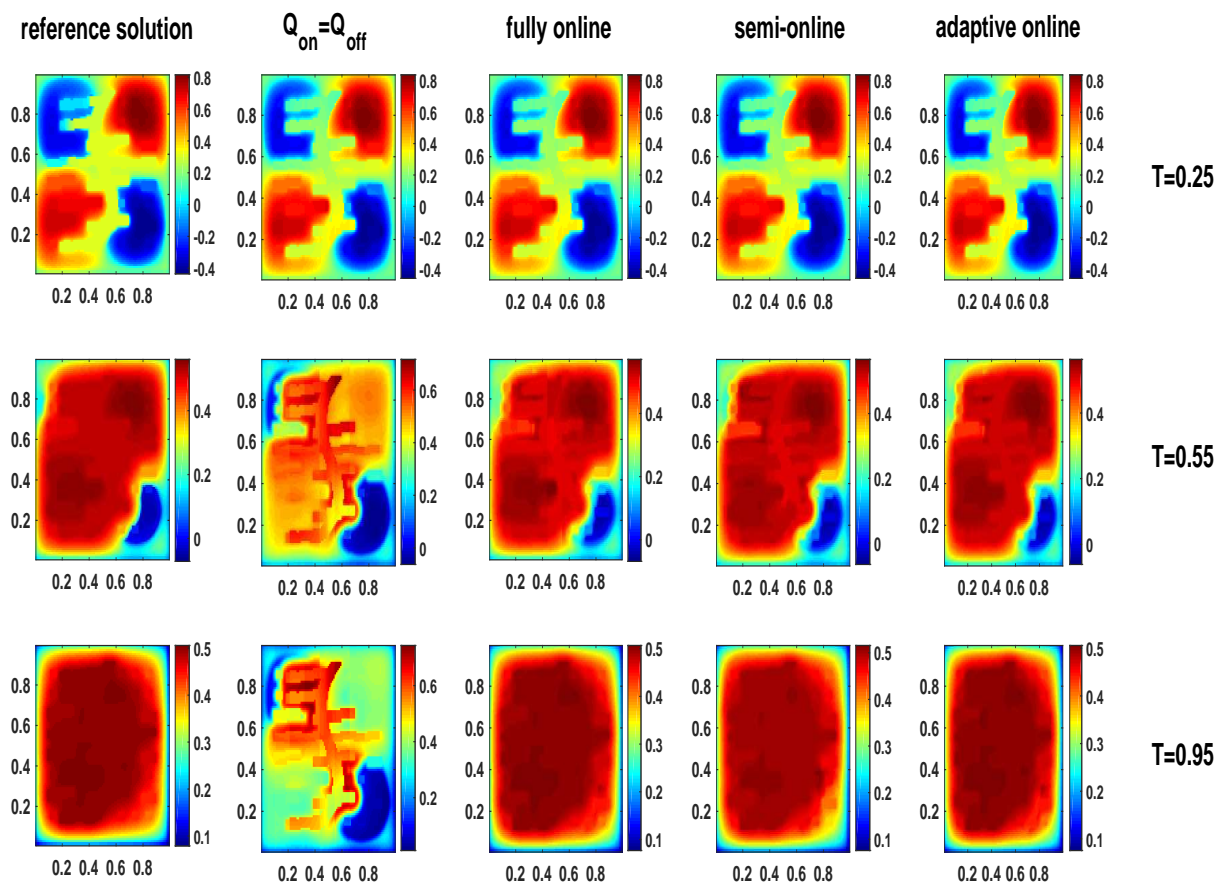}
  \caption{Solution profiles for porous medium equation $(p=3)$. From the first column to the fifth column are the reference solution, the solution calculated by $\bm{Q}_{\text{on}}=\bm{Q}_{\text{off}}$, fully online, semi-online and adaptive online reduced-order modeling. From the first row to the third row are the solutions at time $T=0.25, 0.55$ and $0.95$, respectively.}
  \label{ex2-solution}
\end{figure}

\subsection{The p-Laplacian equation}\label{ns3}
In this subsection, we consider the snapshot data comes from a p-Laplacian equation.  The p-Laplacian equation can model many nonlinear physical processes, such as nonlinear elasticity \cite{Cuccu2009} and turbulent flows \cite{Diaz1994}. In this example, we consider the following p-Laplacian equation,
\[
\left\{
 \begin{aligned}
\frac{\partial u}{\partial t}-div(\kappa(x)b(t)|\nabla u|^{p-2}\nabla u)&=f(x,t)\ \  \  \ \ \  \text{in} \ \Omega\times (0,T],\\
 u&=0 \ \ \ \ \  \ \ \ \ \ \ \  \ \text{on} \ \partial\Omega\times (0,T],\\
 u(\cdot,0)&=u_{0}(x) \ \  \ \ \ \ \ \  \text{in} \ \Omega,
 \end{aligned}
   \right.
\]
where $|\nabla u|^{p-2}=\big((\frac{\partial u}{\partial x_{1}})^{2}+(\frac{\partial u}{\partial x_{2}})^{2}\big)^{\frac{p-2}{2}}$. For the simulation, we take  $\Omega=[0,1]^{2}, p=2.4$ and $f(x,t)=\exp(\frac{10}{\pi}t)(x_{1}+x_{2})$, $u(x,0)=\sin(2\pi x_{1})\sin(2\pi x_{2})$,  $b(t) =(10t+1)(0.5+\sin(\frac{\pi}{2}t))$.
The coefficient  $\kappa(x)$ is depicted in figure \ref{coeff} (right). In this example, we take the data with a time step of $\triangle t=0.001$ in the time interval $[0,0.05]$ as the initial snapshot data, and then obtained the real-time observation data with a time step $\triangle t=0.01$ after $T=0.05$. For multiscale dynamical  systems, it is very expensive to collect information on the spatial fine scale. In this example, we use the spatial coarse-scale data to compare the state prediction with the fine-scale data prediction result. Coarse-scale data come from the spatial coarse scale moment information. In this example, the coarse-scale data represents the coarse scale degrees of freedom of CEM-GMsFEM \cite{Chung2018}.
The fulfilment of  fine-scare data and coarse-scare data can be found in \cite{jiang2020}.  For this example,  the observation function $\bm{g}(\bm{z})=\bm{z}$.

Figure \ref{ex3-err} shows the relative error of state prediction using different methods. In each method, the prediction error of state using spatial coarse-scale data and spatial fine-scale data are also shown.  From this figure, we can see that the error of state prediction using coarse-scale data is larger than that using fine-scale data. When the new observation data is available in real time, if projection operator is not updated online, the error of state prediction increases  with respect to  time. Online update projection operator by the new observation data can accurately  predict the state. It can also be seen from this figure that the modeling  error becomes  small when the new observation data is available.

Table \ref{ex3-tab1} shows the relative error of state prediction using spatial fine-scale data and spatial coarse-scale data  when the size of the local stencil is fixed. It can be seen from this table that as the number of online projection basis functions increases, the error decreases. The prediction error of reduced-order modeling using coarse-scale observation data is larger than that using fine-scale observation data.

Figure \ref{ex3-solution} presents the solutions at time $T=0.005$, $0.025$, $0.055$ and $0.095$. From the first column to the fifth column, they are corresponding to  the reference solution, the solution computed by $\bm{Q}_{\text{on}}=\bm{Q}_{\text{off}}$, fully online, semi-online and adaptive online reduced-order modeling. Because $T = 0.005 $ and $T=0.025$ both lie in  the time interval $[0,0.05]$ of snapshot data, these methods can well approximate the reference solution. However, $T=0.055$ and $T=0.095$  are outside of the time interval of snapshot data. If $\bm{Q}_{\text{on}}$ is not updated in real-time, it will render  a significant modeling  error. The reference solution can be better approximated by updating $\bm{Q}_{\text{on}}$ in the online reduced-order modeling.

 \begin{figure}[hbtp]
  %\centering
  % Requires \usepackage{graphicx}
  \includegraphics[width=8in,height=5in]{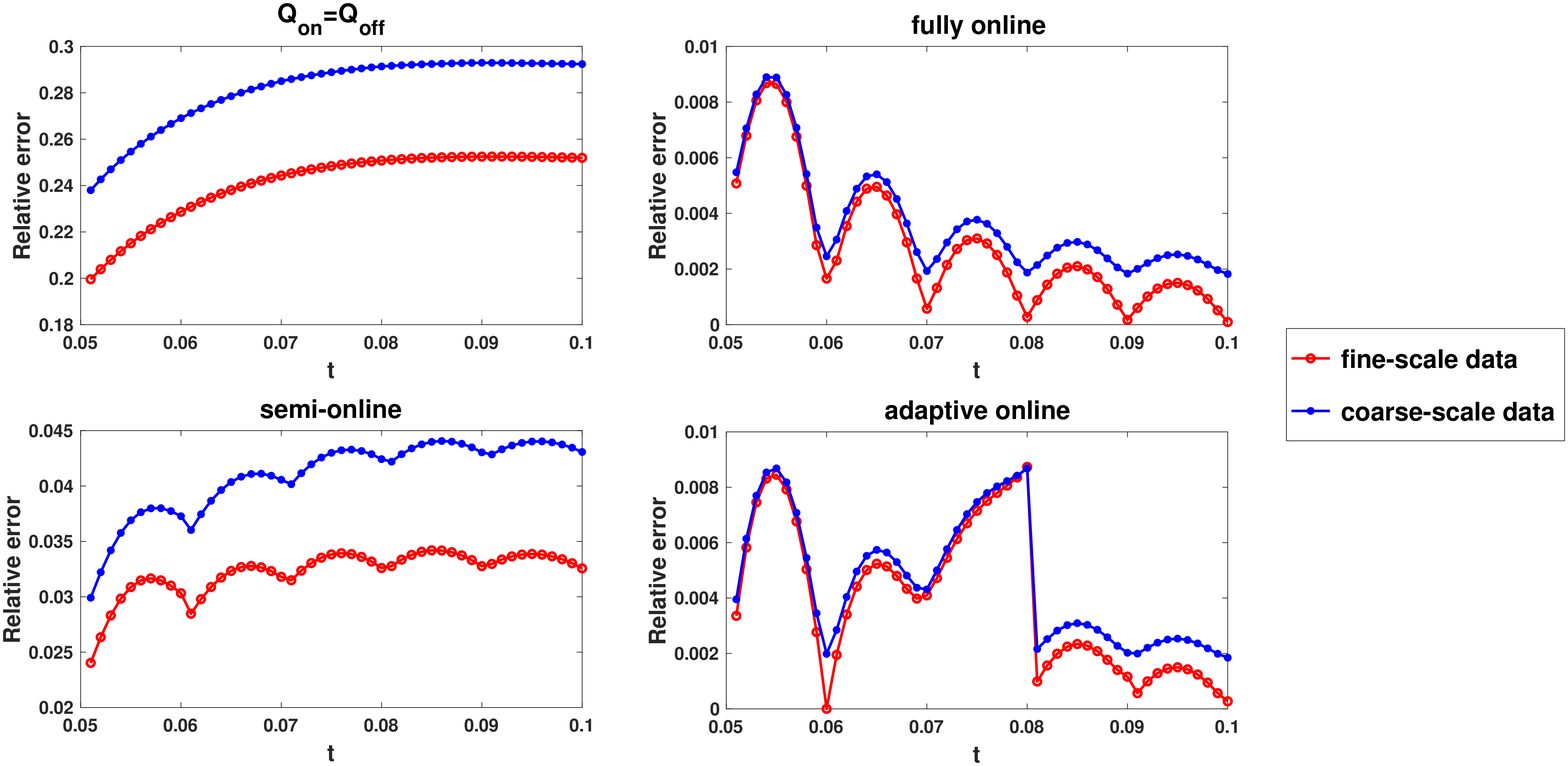}
  \caption{The relative errors of solution $u$ at different times, where $m=3,r=3$.}
  \label{ex3-err}
\end{figure}

\begin{table}[hbtp]
\centering
\caption{The relative error of different methods using spatial fine-scare and coarse-scale data at $T=0.065$, $m=5$.}
\vspace{2pt}
\begin{tabular}{|c|c|c|c|c|c|c|}
\hline
 $$& $$&$r=1$&$r=2$ &$r=3$ &$r=4$&$r=5$\\ \cline{3-7}
 \multirow{4}{*}{fine-scale data} &$\bm{Q}_{\text{on}}=\bm{Q}_{\text{off}}$ &$0.9925$& $0.5078$& $0.2381$& $0.1196$& $0.0618$\\
 $$ &$\text{fully online}$  &$0.4940$&$0.0207$&$0.0034$&$0.0053$&$0.0049$\\
  $$ &$\text{semi-online}$  &$0.9281$&$0.1402$&$0.0372$&$0.0187$&$0.0103$\\
   $$ &$\text{adaptive online}$  &$0.0207$&$0.0049$&$0.0053$&$0.0066$&$0.0065$\\
 \hline
  $$& $$&$r=1$&$r=2$ &$r=3$ &$r=4$&$r=5$\\ \cline{3-7}
   \multirow{4}{*}{coarse-scale data} &$\bm{Q}_{\text{on}}=\bm{Q}_{\text{off}}$ &$0.9929$& $0.8454$& $0.2788$& $0.1205$& $0.0646$\\
 $$ &$\text{fully online}$  &$0.4494$&$0.0214$&$0.0043$&$0.0057$&$0.0051$\\
  $$ &$\text{semi-online}$  &$0.9180$&$0.5170$&$0.0441$&$0.0189$&$0.0077$\\
   $$ &$\text{adaptive online}$  &$0.0214$&$0.0047$&$0.0060$&$0.0065$&$0.0059$\\
    \hline
\end{tabular}
\label{ex3-tab1}
\end{table}

 \begin{figure}[hbtp]
%  \centering
  \flushleft
  % Requires \usepackage{graphicx}
  \includegraphics[width=7.5in,height=6in]{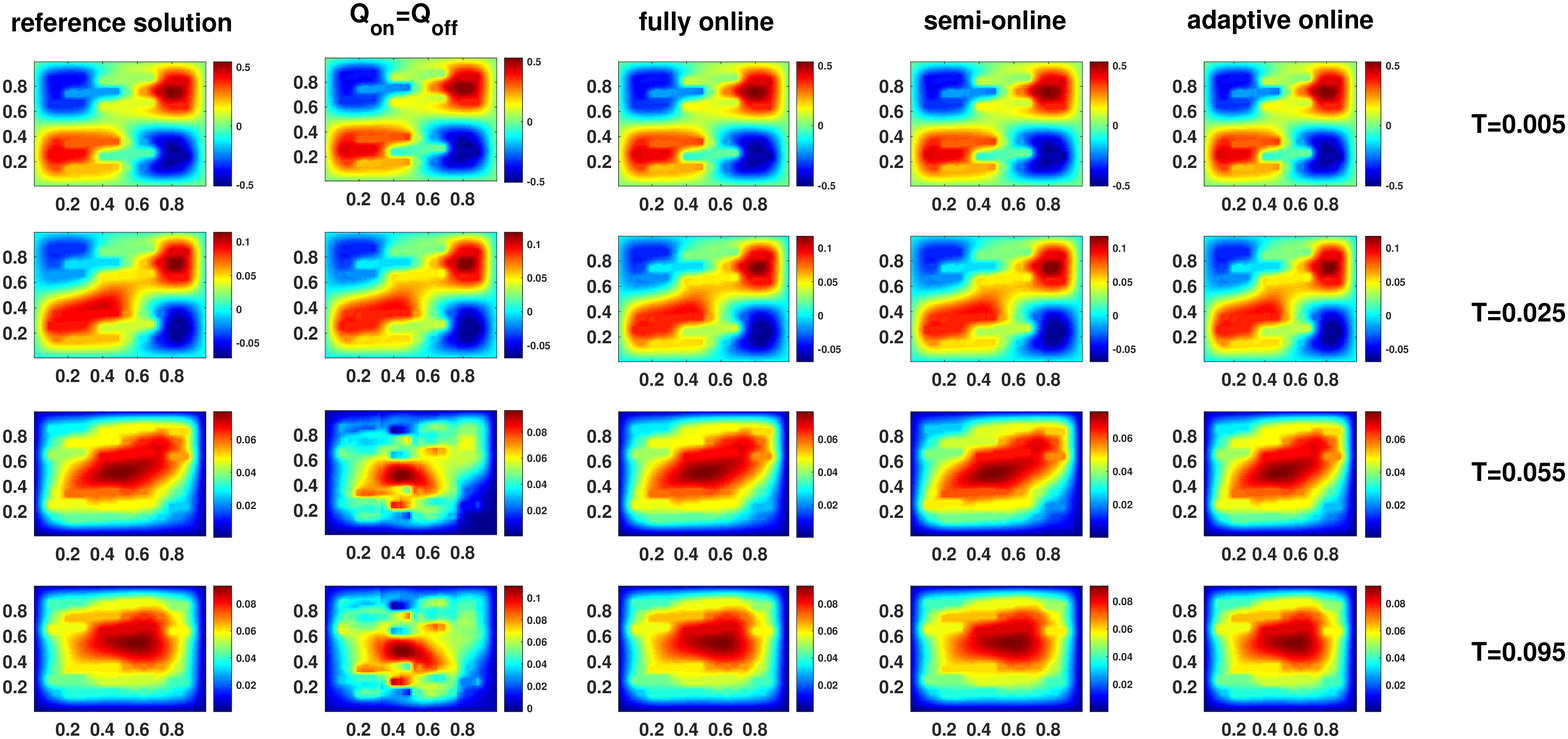}
  \caption{Solution profiles for p-Laplacian equation $(p=2.4)$. From the first column to the fifth column are the reference solution, the solution calculated by $\bm{Q}_{\text{on}}=\bm{Q}_{\text{off}}$, fully online, semi-online and adaptive online reduced-order modeling. From the first row to the fourth row are the solutions at time $T=0.005, 0.025, 0.055$ and $0.095$, respectively.}
  \label{ex3-solution}
\end{figure}

\section{Comments and conclusions}
In this paper, we have presented   reduced-order modeling for Koopman operators of nonautonomous dynamical  systems in multiscale media. The nonautonomous Koopman operator depends on time pair. In order to accurately estimate the time-dependent Koopman operator, a moving time window is used to localize  snapshot data and EDMD was applied to  the local stencil snapshots to  approximate the Koopman operator. However, the high spatial dimension of the observation data in multiscale problems brings great challenges to data-driven modeling.
To this end, we proposed  reduced-order modeling methods to reduce the modeling complexity. When new observation data is available in real time,  the offline projection operator can not match the new local snapshots. To overcome the difficulty,  we proposed  three online reduced-order modeling methods: fully online, semi-online   and adaptive online. The fully online reduced-order modeling can achieve good modeling  accuracy but it is computationally expensive. The semi-online reduced-order modeling  significantly improved the computation efficiency but led less modeling accuracy. The adaptive online reduced-order modeling  adaptively selected  the fully online method and the semi-online  method,  and combined their advantages.   The numerical results showed the  merits of the proposed reduced-order modeling   for the  Koopman operators of nonlinear nonautonomous dynamical  systems.

\begin{appendices}
\section{Comparison of two formulations for Koopman operators}\label{app1}
For nonautonomous dynamical systems,  the Koopman operator can be induced from the process formulation and the skew product flow formulation. In this Appendix, we provide a short comparison for the two formulations.

Consider the following differential equations
\begin{equation}\label{e1}
\frac{d\bm{z}}{dt}=\bm{F}(\bm{z}, \mu)
\end{equation}
\begin{equation}\label{e2}
\frac{d\mu}{dt}=\bm{f}(\mu)
\end{equation}
Assume that  $\mu=\mu(t, \mu_{0})$ is the solution of (\ref{e2}) satisfying the condition $\mu(0, \mu_{0})=\mu_{0}$, and $\bm{z}=\bm{z}(t, \mu_{0}, \bm{z}_{0})$ is the solution of (\ref{e1}) satisfying the condition $\bm{z}(0, \bm{z}_{0})=\bm{z}_{0}$. Then equation (\ref{e2}) generates the driving flow $\theta^{t}(\mu_{0})=\mu(t, \mu_{0})$ and equation (\ref{e1}) generates the nonautonomous flow $\mathcal{S}^{t, \mu_{0}}(\bm{z}_{0})=\bm{z}(t, \mu_{0}, \bm{z}_{0})$. The nonautonomous flow satisfies the cocycle property over $\theta^{t}$:
\begin{equation}
\mathcal{ S}^{0, \mu_{0}}=id,   \quad \mathcal{ S}^{t+s, \mu_{0}}=\mathcal{ S}^{t, \theta^{s}}\circ\mathcal{ S}^{s, \mu_{0}},  \quad for \ \ t \in\mathbb{T}, \mu\in\mathcal{T}.
\end{equation}
Furthermore, the mapping $\pi: \mathbb{T}\times\mathcal{T}\times \mathcal{M}\rightarrow \mathcal{T}\times \mathcal{M}$ defined by
\[
\pi\big(t, (\mu, \bm{z})\big) := \big(\theta^{t}(\mu_{0}), \mathcal{S}^{t, \mu_{0}}(\bm{z}_{0}) \big)
\]
forms an autonomous semi-dynamical systems on $\mathcal{T}\times \mathcal{M}$. The family $\pi^{t}=\pi(t,\cdot), t \in\mathbb{T}$ is called the skew product flow associated with the nonautonomous dynamical systems $(\theta^{t}, \mathcal{S}^{t, \mu_{0}})$.
We note that any nonautonomous dynamical systems
\[
\frac{d\bm{z}}{dt} = \bm{F}(\bm{z},t)
\]
can be converted to
\[
 \left\{
 \begin{aligned} \frac{d\bm{z}}{dt}  &= \bm{F}(\bm{z},\mu)\\
 \frac{d\mu}{dt} &= 1.  \end{aligned} \right.
\]

The observation functions are  very important in approximating   Koopman operator.  When a set of scalar-valued observables  are functions of state $\bm{z}$,  the scalar observables $g : \mathcal{M} \rightarrow \mathbb{R}$. Then the Koopman operator induced from process formulation and skew product flow formulation are equivalent and belong to two parameter operator. For the  details, please refer to \cite{Senka2020}. Some of the specific comparisons are listed in Table \ref{at1}.

\begin{table}[h]
\centering
\caption{Comparison of process formulation and  skew product flow formulation}
\vspace{4pt}
\begin{tabular}{c|c|c}
\hline
    &$\frac{d\bm{z}}{dt} = \bm{F}(\bm{z},t)$  &   $  \left\{
 \begin{aligned} \frac{d\bm{z}}{dt}  &= \bm{F}(\bm{z},\mu)\\
 \frac{d\mu}{dt} &= 1,  \end{aligned} \right.$ \\
\hline
    solution form &$\bm{z}=\bm{z}(t,t_{0},\bm{z}_{0}),  \bm{z}\in\mathcal{M}$ &$ \begin{aligned} \mu&=\mu(t,\mu_{0}),  \mu\in\mathcal{T}\\ \bm{z}&=\bm{z}(t,\mu_{0}, \bm{z}_{0}), \bm{z}\in\mathcal{M} \end{aligned}$  \\
  \hline
  solution flow & $\mathcal{S}^{t,t_{0}}(\bm{z}_{0})=\bm{z}(t,t_{0}, \bm{z}_{0})$ &  $\begin{aligned} \theta^{t}(\mu_{0})&=\mu(t,\mu_{0}),\\  \mathcal{S}^{t,\mu_{0}}(\bm{z}_{0})&=\bm{z}(t,\mu_{0},\bm{z}_{0}) \end{aligned} $   \\
  \hline
  cocycle property &  $\begin{aligned} \mathcal{S}^{t_{0},t_{0}}&=id \\  \mathcal{S}^{t+s,s}\circ\mathcal{ S}^{s,t_{0}}&=\mathcal{ S}^{t+s,t_{0}}\end{aligned}$ &$\begin{aligned} \mathcal{S}^{0,\mu_{0}}&=id \\  \mathcal{S}^{t+s,\mu_{0}}&=\mathcal{ S}^{t,\theta^{s}}\circ\mathcal{ S}^{s,\mu_{0}} \end{aligned}$ \\
  \hline
   \multirow{3}{*}{Koopman operator} & \multicolumn{2}{c}{Consider the space  $\bm{G}(\mathcal{M})$ of scalar observables $g : \mathcal{M} \rightarrow \mathbb{R}$} \\ \cline{2-3}
   &$\mathcal{K} ^{t,t_{0}} : \bm{G}(\mathcal{M}) \rightarrow \bm{G}(\mathcal{M})$ and  & $\mathcal{K} ^{t,\mu_{0}} : \bm{G}(\mathcal{M}) \rightarrow \bm{G}(\mathcal{M})$ and \\
    &$\mathcal{K} ^{t,t_{0}}g(\bm{z}_{0}):=g\bigg(\mathcal{ S}^{t,t_{0}}(\bm{z}_{0})\bigg)$ & $\mathcal{K} ^{t,\mu_{0}}g(\bm{z}_{0}):=g\bigg(\mathcal{ S}^{t,\mu_{0}}(\bm{z}_{0})\bigg)$\\
  \hline
\end{tabular}
\label{at1}
\end{table}

When we define a  set of scalar-valued observables that are functions of state $\bm{z}$ and the auxiliary variables $\mu$,  the scalar observables $g : \mathcal{M\times\mathcal{T}} \rightarrow \mathbb{R}$. Each observable is an element of an infinite-dimensional Hilbert space $\bm{H}(\mathcal{M}, \mathcal{T})$. The Koopman operator family $\mathcal{K} ^{t} : \bm{H}(\mathcal{M}, \mathcal{T}) \rightarrow \bm{H}(\mathcal{M}, \mathcal{T})$ is defined by
\[
\mathcal{K} ^{t}g(\bm{z}_{0},\mu_{0}):=g(\bm{z}, \mu).
\]
The Koopman operator defined on the observation function space $\bm{H}(\mathcal{M}, \mathcal{T})$ is a single parameter operator, so DMD or EDMD can be directly used to approximate this Koopman operator.

\end{appendices}

\smallskip
\bigskip
\textbf{Acknowledgement:}
L. Jiang acknowledges the support of NSFC 12271408,  the
Fundamental Research Funds for the Central Universities and the support by Shanghai Science and Technology Committee 20JC1413500.
We thank for the comments of referees to improve the paper.


\begin{thebibliography}{99}




\bibitem{Brunton2017}
{\sc S. L. Brunton, B. W. Brunton, J. L. Proctor, E. Kaiser and J. N. Kutz}, {\em Chaos as an intermittently forced linear system}, Nature Communications, 8 (2017), pp.1--9.

\bibitem{Brunton20162}
{\sc S. L. Brunton, J. L. Proctor and J. N. Kutz}, {\em Discovering governing equations from data by sparse identification of nonlinear dynamical systems}, Proceedings of the National Academy of Sciences, 113 (2016), pp.3932--3937.

\bibitem{Cuccu2009}
{\sc F. Cuccu, B. Emamizadeh and G. Porru}, {\em Optimization of the first eigenvalue in problems involving the p-Laplacian}, Proc. Amer. Math. Soc., 137 (2009), pp. 1677--1687.



\bibitem{Chung2018}
{\sc E.  Chung, Y. Efendiev and W. Leung}, {\em Constraint Energy Minimizing Generalized Multiscale Finite Element Method}, Comput. Methods Appl. Mech. Eng., 339 (2018), pp. 298--319.

\bibitem{Dafermos1971}
{\sc C. M. Dafermos}, {\em An invariance principle for compact processes}, Journal of Differential Equations, 9 (1971), pp.239--252.

\bibitem{Diaz1994}
{\sc J. I. Diaz and F. De Thelin}, {\em On a nonlinear parabolic problem arising in some models related to turbulent flows}, SIAM J. Math. Anal., 25 (1994), pp.1085--1111.




\bibitem{Eckart1936}
{\sc C. Eckart and G. Young}, {\em The approximation of one matrix by another of lower rank}, Psychometrika, 1 (1936), pp. 211--218.







\bibitem{Harti1993}
{\sc A. Harti}, {\em Discrete multi-resolution analysis and generalized wavelets}, Applied numerical mathematics, 12 (1993), pp. 153--192.

\bibitem{Hemati2014}
{\sc M.S. Hemati, M. O. Williams and C. W. Rowley}, {\em Dynamic mode decomposition for large and streaming datasets}, Physics of Fluids, 26 (2014), pp. 111701.


\bibitem{Hou1997}
{\sc T. Hou and  X. Wu}, {\em A multiscale finite element method for elliptic problems in composite materials and porous media}, J. Comput. Phys., 134 (1997), pp.~169--189.


\bibitem{jiang2020}
{\sc L.  Jiang and M.  Li}, {\em Model reduction for nonlinear multiscale parabolic problems using dynamic mode decomposition}, Int J Numer Methods Eng, 121 (2020), pp. 3680--3701.
\bibitem{Jovanovic2014}
{\sc M. R. Jovanovi\'{c}, P. J. Schmid and J. W. Nichols}, {\em Sparsity-promoting dynamic mode decomposition}, Physics of Fluids, 26 (2014), 024103.

\bibitem{koopman0}
{\sc  B. Koopman}, {\em Hamiltonian systems and transformation in Hilbert space}, Proc. Natl. Acad. Sci. U.S.A., 17 (1931), pp. 315--318.

\bibitem{Kutz2016}
{\sc J. N. Kutz, X. Fu and S. L. Brunton}, {\em Multiresolution dynamic mode decomposition}, SIAM J. Appl. Dyn. Syst., 15 (2016), pp. 713--735.

\bibitem{Klus2016}
{\sc S. Klus, P. Koltai and C. Sch\"{u}tte }, {\em On the numerical approximation of the Perron Frobenius and Koopman operator}, J. Comput. Dyn., 3 (2016), pp. 51--79.

\bibitem{Korda2018}
{\sc M. Korda and I. Mezi\'{c}}, {\em On Convergence of Extended Dynamic Mode Decomposition to the Koopman Operator}, J. Nonlinear. Sci., 28 (2018), pp. 687--710.

\bibitem{koopman1}
{\sc  B.  Koopman and  J. Neumann}, {\em Dynamical systems of continuous spectra}, Proc. Natl. Acad. Sci. U.S.A., 18 (1932), pp. 255--263.




\bibitem{Li2021}
{\sc M. Li and L. Jiang}, {\em Deep learning nonlinear multiscale dynamic problems using Koopman operator}, J. Comput. Phys., 446 (2021), 110660.


\bibitem{Lusch2018}
{\sc  B. Lusch, J. Kutz and S.  Brunton}, {\em Deep learning for universal linear embeddings of nonlinear dynamics}, Nature communications, 9 (2018), pp. 1--10.

\bibitem{Li2019}
{\sc  X. Li, S. Wang and Y. Cai}, {\em Tutorial: Complexity analysis of singular value decomposition and its variants}, https://arxiv.org/abs/1906.12085.




\bibitem{Muskat1937}
{\sc M. Muskat}, {\em The Flow of Homogeneous Fluids Through Porous Media}, McGraw-Hill, NewYork, 1937.

\bibitem{Mezic2005}
{\sc I. Mezi\'{c}}, {\em Spectral properties of dynamical systems, model reduction and decompositions}, Nonlinear Dynamics, 41 (2005), pp. 309--325.
\bibitem{Senka2020}
{\sc S. Ma\'{c}e\v{s}i\'{c}, N. \v{C}rnjari\'{c}-\v{Z}ic}, {\em The Koopman Operator in Systems and Control.} Lecture Notes in Control and Information Sciences, Vol. 484, 2020,  Springer.

\bibitem{Macesic2018}
{\sc S. Ma\'{c}e\v{s}i\'{c}, N. \v{C}rnjari\'{c}-\v{Z}ic and I. Mezi\'{c}}, {\em Koopman operator family spectrum for nonautonomous systems}, SIAM J. Appl. Dyn. Syst., 17 (2018), pp. 2478--2515.

\bibitem{Mezic2016}
{\sc I. Mezi\'{c} and A. Surana}, {\em Koopman mode decomposition for periodic/quasi-periodic time dependence}, IFAC-PapersOnLine, 49 (2016), pp. 690--697.



\bibitem{Schmid2010}
{\sc P.  Schmid}, {\em Dynamic mode decomposition of numerical and experimental data}, J. Fluid Mech., 656 (2010), pp. 5--28.
\bibitem{Takeishi2017}
{\sc N. Takeishi, Y. Kawahara, T. Yairi}, {\em Learning Koopman invariant subspaces for dynamic mode decomposition}, Advances in Neural Information Processing Systems, (2017), pp. 1130--1140.
\bibitem{Tu2014}
{\sc J. Tu, C. Rowley, D. Luchtenberg, S. Brunton and J. Kutz}, {\em On Dynamic Mode Decomposition: Theory and Applications}, J. Comput. Dyn., 1 (2014), pp. 391--421.


\bibitem{Williams2015}
{\sc M. Williams, I. Kevrekidis and C. Rowley}, {\em A data-driven approximation of the Koopman operator: Extending dynamic mode decomposition}, J. Nonlinear. Sci., 25 (2015), pp. 1307--1346.

\bibitem{Williams2015kernel}
{\sc M.  Williams, C.  Rowley and  I.  Kevrekidis}, {\em A kernel-based method for data-driven koopman spectral analysis}, J. Comput. Dyn., 2 (2015), pp. 247--265.
\bibitem{Zou2006}
{\sc H. Zou, T. Hastie and R. Tibshirani}, {\em Sparse principal component analysis}, J. Comput. Graph. Stat., 15 (2006), pp. 265--286.

\bibitem{Zhang2019}
{\sc H. Zhang, C. W. Rowley, E. A. Deem and L. N. Cattafesta}, {\em Online Dynamic Mode Decomposition for Time-Varying Systems}, SIAM J. Appl. Dyn. Syst., 18 (2019), pp. 1586--1609.




\end{thebibliography}
\end{document}